\definecolor{darkblue}{rgb}{0.0, 0.0, 0.55}
\definecolor{bordeaux}{rgb}{0.34, 0.01, 0.1}
\definecolor{color1}{RGB}{145,30,180}
\definecolor{color2}{RGB}{245,130,48}
\definecolor{color3}{RGB}{230,25,75}
\definecolor{lightgreen}{RGB}{144, 238, 144}
\def\R{{\mathbb{R}}}
\def\N{{\mathbb{N}}}
\def\x{{\mathbf{x}}}
\def\S{{\mathbb{S}}}
\def\Tr{\hbox{\rm{Tr}}}
\def\grad{\hbox{\rm{grad}}}
\def\hess{\hbox{\rm{Hess}}}
\def\rank{\hbox{\rm{rank}}}
\def\cA{{\mathcal{A}}}
\def\cB{{\mathcal{B}}}
\def\cM{{\mathcal{M}}}
\def\cN{{\mathcal{N}}}
\def\Diag{\hbox{\rm{Diag}}}
\DeclareMathOperator*{\argmax}{arg\,max}
\crefname{hypothesis}{Hypothesis}{Hypotheses}
\title{Solving low-rank semidefinite programs via manifold optimization
}
\author{Jie Wang\thanks{Academy of Mathematics and Systems Science, Chinese Academy of Sciences, Beijing, China
  (\email{wangjie212@amss.ac.cn})} \and Liangbing Hu\thanks{Nanjing Research Institute of Electronics Technology, Nanjing, China
  (\email{huliangbing2000@163.com})}}
\DeclareMathOperator{\diag}{diag}
\begin{document}

\maketitle

\begin{abstract}
We propose a manifold optimization approach to solve linear semidefinite programs (SDP) with low-rank solutions, with an emphasis on SDP relaxations for polynomial optimization problems. This approach incorporates the inexact augmented Lagrangian method (ALM) and the Burer-Monteiro factorization, and features the self-adaptive strategies for updating the factorization size and the penalty parameter.
We establish global convergence of the inexact ALM, despite the non-convexity brought by the Burer-Monteiro factorization. We further provide a practical algorithm building on the inexact ALM, and along with the algorithm we release an open-source SDP solver {\tt ManiSDP}.
Comprehensive numerical experiments demonstrate that {\tt ManiSDP} achieves state-of-the-art in terms of efficiency, accuracy, and scalability, and is faster than several advanced SDP solvers ({\tt MOSEK}, {\tt SDPLR}, {\tt SDPNAL+}, {\tt STRIDE}) by up to orders of magnitudes on a variety of linear SDPs. The largest SDP solved by {\tt ManiSDP} (in about 8.5 hours with maximal KKT residue 3.5e-13) is the second-order moment relaxation of a binary quadratic program with $120$ variables, which has matrix dimension $7261$ and contains $17,869,161$ affine constraints.
\end{abstract}

\begin{keywords}
semidefinite programming, polynomial optimization, low-rank solution, moment-SOS relaxation, Burer-Monteiro factorization, augmented Lagrangian method, manifold optimization
\end{keywords}

\begin{AMS}
  Primary, 90C22; Secondary, 90C23,90C30
\end{AMS}

\section{Introduction}
In this paper, we aim to efficiently solve the following semidefinite programming (SDP) problem:
\begin{equation}\label{sdp}
\begin{cases}
\inf\limits_{X\succeq0}&\langle C, X\rangle\\
\,\,\rm{s.t.}&\mathcal{A}(X)=b, \mathcal{B}(X)=d,
\end{cases}\tag{SDP}
\end{equation}
where $\mathcal{A}:\S_n\rightarrow\R^m$, $\mathcal{B}:\S_n\rightarrow\R^l$ are linear maps ($\S_n$ denotes the set of $n\times n$ symmetric matrices), $b\in\R^m,d\in\R^l$.
In \eqref{sdp} the linear constraints $\cA(X)=b$ are arbitrary while the linear constraints $\cB(X)=d$, if present, are assumed to define certain manifold structure. Moreover, we assume that \eqref{sdp} admits a low-rank solution $X^{\star}$, i.e., $\rank(X^{\star})\ll n$.

Such \eqref{sdp} arises from a diverse set of fields, e.g., systems and control \cite{parrilo2003semidefinite}, signal processing \cite{de2008design,luo2010semidefinite}, optimal power flow \cite{bai2008semidefinite}, matrix completion \cite{candes2012exact}, quantum steering \cite{cavalcanti2016quantum}, computer vision \cite{yang2022certifiably}, to just name a few; see also the surveys \cite{vandenberghe1996semidefinite,wolkowicz2012handbook} and references therein.
It also serves as tractable convex relaxations for many difficult (usually NP-hard) non-convex optimization problems, e.g., quadratically constrained quadratic programs (QCQP) \cite{luo2010semidefinite}, combinatorial optimization problems \cite{goemans1997semidefinite}, polynomial optimization problems \cite{Las01}.

In view of the wide applications, a great deal of effort has been dedicated to solving \eqref{sdp} over the past decades and many practical algorithms have been developed from various angles.
For small/medium-scale SDPs, interior-point methods are believed to be the most accurate, efficient and robust algorithms \cite{andersen2003implementing,toh1999sdpt3}. However, for large-scale SDPs interior-point methods are no longer reliable because of their extensive memory occupation and high computational cost in solving a large and dense linear system at each iteration.
Aiming to tackle large-scale SDPs, Yang, Sun, and Toh proposed an augmented Lagrangian algorithm combined with the semismooth Newton method \cite{yang2015sdpnal}. The related solver {\tt SDPNAL+} has been shown to have good scalability on {\em degenerate} SDPs \cite{sun2020sdpnal}.
There are also quite a few attempts to overcome the memory issue by relying on first-order methods.
For instance, Wen et al. proposed an alternating-direction augmented Lagrangian method (ALM) \cite{wen2010alternating}. The main drawback of first-order methods is that they can hardly achieve high accuracy.

In many cases, large-scale SDPs possess certain structures, e.g., chordal sparsity, constant trace, admitting low-rank solutions. Such structures can be exploited to speed up computation either in interior-point methods or in first-order methods. Zhang and Lavaei proposed to exploit chordal sparsity for interior-point methods via dualized clique tree conversion \cite{zhang2021sparse}. Zheng et al. \cite{zheng2020chordal} and Garstka et al. \cite{garstka2021cosmo} proposed to exploit chordal sparsity in a framework of the alternating direction methods of multipliers (ADMM).
By exploiting the constant trace property, Helmberg and Rendl proposed a spectral bundle method for solving \eqref{sdp} \cite{helmberg2000spectral}. More recently, Yurtsever et al. proposed to exploit the constant trace property in a conditional-gradient-based augmented Lagrangian framework combined with a matrix sketching technique \cite{yurtsever2019scalable}.
The property of admitting low-rank solutions can be exploited in the framework of interior-point methods \cite{bellavia2021relaxed,habibi2021barrier,zhang2017modified} or in the framework of operator splitting methods \cite{souto2022exploiting}.
For SDPs arising as convex relaxations of polynomial optimization problems and admitting rank-one solutions, Yang et al. proposed a projected gradient method accelerated by local search \cite{yang2022inexact}, which is scalable on a variety of SDPs emerging from computer vision problems \cite{yang2022certifiably}.

Another notable approach taking the low-rank property into account is to perform a factorization $X=YY^{\intercal}$ with $Y\in\R^{n\times p},p\ll n$ so that \eqref{sdp} becomes a non-convex QCQP (called the factorized problem), which is called the {\em Burer-Monteiro factorization} in the literature \cite{burer2003nonlinear,burer2005local}, leading to a low-rank SDP solver {\tt SDPLR}. For the factorized problem, nonlinear programming tools can find a second-order critical point at much less cost, and for almost all cost matrices $C$, under mild conditions this second-order critical point is guaranteed to be globally optimal whenever $\frac{p(p+1)}{2}>m$ \cite{boumal2016non,boumal2020deterministic,cifuentes2021burer}. Hence for the approach being efficient, $m$ is required not to be too large. Recent treatments of this approach for certifiably correct machine perception and the graph equipartition problem could be found in \cite{rosen2021scalable, tang2024solving}. See also \cite{han2024low,huang2024suboptimality,monteiro2024low,tang2023feasible} for very recent developments of this approach.

\vspace{0.5em}
{\bf Related literature.} Following Burer and Monteiro's idea, Journ\'ee et al. reformulated \eqref{sdp} as a nonlinear program on a manifold under the assumption that there is no arbitrary linear constraint (i.e., $m=0$) and the feasible set $\cN_p=\{Y\in\R^{n\times p}\mid\mathcal{B}(YY^{\intercal})=d\}$ is a smooth manifold \cite{journee2010low}. Consequently, the nonlinear program becomes a Riemannian optimization problem which can be efficiently solved by off-the-shelf Riemannian optimization tools. They also provided a strategy for escaping from saddle points so that global convergence can be guaranteed. See \cite{2204.14067} for an extension of this approach to the regularized convex matrix optimization problem and \cite{rosen2019se} to the special Euclidean synchronization problem. A drawback of Journ\'ee et al.'s approach is that they did not deal with linear constraints that do not define a manifold.
The first work to treat constrained optimization on Riemannian manifolds using an augmented Lagrangian framework appears to be \cite{liu2020simple}. Later, Zhou et al. developed an ALM for solving a class of manifold optimization problems with nonsmooth objective functions and nonlinear constraints \cite{zhou2022semismooth}. Building on constrained Riemannian optimization, Wang et al. recently extended Journ\'ee et al.'s approach for solving SDPs with nonsmooth objective functions and arbitrary linear constraints $\cA(X)=b$ within an augmented Lagrangian framework in which a Riemannian semismooth Newton method is employed to solve the ALM subproblem \cite{wang2023decomposition}.

\vspace{0.5em}
{\bf Contributions.} Our contributions are as follows.

$\bullet$ We present a manifold optimization approach for solving \eqref{sdp} by adopting the Burer-Monteiro factorization and the idea of Journ\'ee et al. More concretely, our approach employs an ALM framework to handle the linear constraints $\cA(X)=b$ and applies the Burer-Monteiro factorization $X=YY^{\intercal}$ to the ALM subproblem in order to exploit the low-rank property. The ALM subproblem is then recast as a manifold optimization problem on $\cN_p$ which could be solved with efficient manifold optimization methods. To circumvent the non-convexity introduced by the Burer-Monteiro factorization, we design an effective strategy (inspired by Journ\'ee et al.) to escape from saddle points. Then under certain conditions, we establish the global convergence of the proposed inexact ALM.

$\bullet$ To further enhance the practical performance of the approach, we propose self-adaptive strategies for updating the factorization size and the penalty parameter.
Specifically, (i) we dynamically adjust the factorization size $p$ so that the decision variables of the ALM subproblem is as few as possible; (ii) we propose a \emph{simple} strategy to self-adaptively increase or decrease the penalty parameter so that it will not become too large (note that the ALM subproblem with a large penalty parameter is more difficult to solve).

$\bullet$ We provide a practical algorithm building on the inexact ALM, where the ALM subproblem is solved only to gain some descent (without optimality requirements) and the procedure of escaping from saddle points is performed only once within each outer iteration of the ALM. Surprisingly and intriguingly, global convergence still occurs in all numerical examples examined in this paper.

$\bullet$ As another main contribution, we release an open-source low-rank SDP solver named {\tt ManiSDP} that implements the proposed algorithm. Extensive numerical experiments were performed to benchmark the solver, which demonstrate that {\tt ManiSDP} is accurate, efficient, and scalable on a variety of SDPs with low-rank solutions, and (substantially) outperforms a few popular SDP solvers.

Although both incorporate the ALM framework, the Burer-Monteiro factorization and manifold optimization, this work differs from \cite{wang2023decomposition} in fourfold: (1) This work targets at solving large-scale linear SDPs, especially those arising from \emph{the moment-SOS hierarchy} of polynomial optimization problems while \cite{wang2023decomposition} primarily aims to solve nonlinear and nonsmooth SDPs. Accordingly, this work adopts the Riemannian trust-region method to solve the ALM subproblem while \cite{wang2023decomposition} adopts a Riemannian semismooth Newton method. (2) This work introduces a \emph{single-step} strategy for escaping from saddle points within each ALM outer iteration, yielding a practical algorithm with superior performance. (3) This work distinguishes itself from conventional ALM implementations and \cite{wang2023decomposition} through a
self-adaptive strategy that enables reduction of the penalty parameter and improves the performance of the algorithm a lot.
(4) Along with an explicit algorithm, this work releases an open-source low-rank SDP solver.

The rest of the paper is organized as follows. In Section \ref{preliminaries}, we collect notations and some preliminary results. In Section \ref{alfw}, we give the augmented Lagrangian framework with the Burer-Monteiro factorization. In Section \ref{subproblem}, we describe some computational details on solving the ALM subproblem with the Riemannian trust-region method, discuss how to escape from saddle points, and prove global convergence of the inexact ALM. In Section \ref{algorithms}, we describe the strategies of updating the factorization size and the penalty parameter, and present the practical algorithm. Results of numerical experiments are provided in Section \ref{experiments}. Conclusions are made in Section \ref{conclusions}.

\section{Notation and preliminaries}\label{preliminaries}
$\R$ (resp. $\R^+$, $\N$) denotes the set of real numbers (resp. positive real numbers, nonnegative integers). For a positive integer $n$, let $[n]\coloneqq\{1,2,\ldots,n\}$. Let $\S_n$ (resp. $\S^+_n$) denote the set of (resp. positive semidefinite/PSD) symmetric matrices of size $n$. We use $\Tr(A)$ (resp. $A^{\intercal}$) to denote the trace (resp. transpose) of a matrix $A\in\R^{n\times m}$. For two matrices $A,B\in\R^{n\times m}$, the inner product is defined as $\langle A, B\rangle=\Tr(A^{\intercal}B)$. For $A\in\R^{n\times n}$, $\diag(A)$ denotes the diagonal of $A$, $\Diag(A)$ denotes the diagonal matrix with the same diagonal as $A$, and $\lambda_{\min}(A),\lambda_{\max}(A)$ denote the smallest, largest eigenvalues of $A$, respectively. For a vector $v$, $\|v\|$ is the $2$-norm of $v$ and for a matrix $A$, $\|A\|$ is the Frobenius norm of $A$.
For a function $f(x)$, we write $\nabla f(x)$ (resp. $\grad\,f(x)$) for the Euclidean (resp. Riemannian) gradient, and write $\nabla^2 f(x)[u]$ (resp. $\hess\,f(x)[u]$) for the Euclidean (resp. Riemannian) Hessian acting on $u$. For a set $E$, $|E|$ denotes its cardinality.

Let us consider \eqref{sdp} with $\mathcal{A}(X)\coloneqq(\langle A_i, X\rangle)_{i=1}^m$ and $\mathcal{B}(X)\coloneqq(\langle B_i, X\rangle)_{i=1}^l$ for $A_i,B_i\in\S_n$. Let $\cA^*:\R^m\rightarrow\S_n$ be the adjoint operator of $\cA$ defined as $\cA^*(y)\coloneqq\sum_{i=1}^my_iA_i$ for $y\in\R^m$; similarly, let $\cB^*:\R^l\rightarrow\S_n$ be the adjoint operator of $\cB$ defined as $\cB^*(z)\coloneqq\sum_{i=1}^lz_iB_i$ for $z\in\R^l$. Throughout the paper, we assume that strong duality holds for \eqref{sdp}.
\begin{lemma}\label{lm:optimality}
A matrix $X\in\S_n$ is a minimizer of \eqref{sdp} if and only if there exist Lagrange multipliers $y\in\R^m$ and $z\in\R^l$ such that
\begin{subequations}\label{optimality}
\begin{align}
\mathcal{A}(X)&=b,\\
\mathcal{B}(X)&=d,\\
X&\succeq0,\\
S\coloneqq C-\mathcal{A}^*(y)-\mathcal{B}^*(z)&\succeq0,\\
XS&=0.
\end{align}
\end{subequations}
\end{lemma}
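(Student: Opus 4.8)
The plan is to obtain \eqref{optimality} as the optimality conditions coming from weak and strong Lagrangian duality for \eqref{sdp}. The relevant dual problem is
\[
\sup_{y\in\R^m,\ z\in\R^l}\ \langle b,y\rangle+\langle d,z\rangle\qquad\text{subject to}\qquad S\coloneqq C-\cA^*(y)-\cB^*(z)\succeq0,
\]
and the one computation I would carry out up front is the weak-duality identity: for any $X$ feasible for \eqref{sdp} and any $(y,z)$ feasible for the dual,
\[
\langle C,X\rangle-\langle b,y\rangle-\langle d,z\rangle=\langle C-\cA^*(y)-\cB^*(z),\,X\rangle=\langle S,X\rangle\ \ge\ 0,
\]
using $\langle\cA^*(y),X\rangle=\langle y,\cA(X)\rangle=\langle y,b\rangle$ (and likewise for $\cB$) together with $S,X\succeq0$. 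In particular the primal value dominates the dual value, and for a fixed primal-feasible $X$ and dual-feasible $(y,z)$ the two objective values coincide exactly when $\langle S,X\rangle=0$.

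Next I would isolate the only nontrivial algebraic ingredient, the complementarity fact: for $S,X\in\S^+_n$, $\langle S,X\rangle=0\iff XS=0$. The direction ``$\Leftarrow$'' is immediate by taking the trace. For ``$\Rightarrow$'', using a spectral decomposition $S=\sum_i\lambda_is_is_i^{\intercal}$ with all $\lambda_i\ge0$ one has $\langle S,X\rangle=\sum_i\lambda_i\,s_i^{\intercal}Xs_i$, a sum of nonnegative numbers; it vanishes only if $s_i^{\intercal}Xs_i=\|X^{1/2}s_i\|^2=0$ for every $i$ with $\lambda_i>0$, whence $Xs_i=0$ for those $i$ and $XS=\sum_i\lambda_i(Xs_i)s_i^{\intercal}=0$.

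With these in hand both implications are short. For ``if'': given $X$ and $(y,z)$ satisfying \eqref{optimality}, $X$ is primal feasible, $(y,z)$ is dual feasible, and $XS=0$ forces $\langle S,X\rangle=0$, so the weak-duality identity gives $\langle C,X\rangle=\langle b,y\rangle+\langle d,z\rangle$; combined with weak duality this means $X$ attains the primal infimum. For ``only if'': if $X$ is a minimizer, the strong-duality assumption supplies a dual maximizer $(y,z)$ with $S=C-\cA^*(y)-\cB^*(z)\succeq0$ and $\langle b,y\rangle+\langle d,z\rangle=\langle C,X\rangle$; primal feasibility of $X$ and dual feasibility of $(y,z)$ give the first four conditions of \eqref{optimality}, and plugging the equality of objective values into the weak-duality identity gives $\langle S,X\rangle=0$, hence $XS=0$ by the complementarity fact, which is the last condition.

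The main point to be careful about is what ``strong duality holds'' is taken to mean: the ``only if'' direction uses not merely a zero duality gap but also attainment of the dual supremum, so I would make that part of the standing hypothesis explicit (it follows, for instance, from strict primal feasibility via Slater's condition). No other step is more than the bookkeeping above.
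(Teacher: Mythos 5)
Your proposal is correct and follows essentially the same route as the paper, which simply invokes the standard KKT/Lagrangian-duality conditions for \eqref{sdp} in one line; your write-up merely makes explicit the weak-duality identity, the PSD complementarity fact $\langle S,X\rangle=0\iff XS=0$, and the two directions. Your observation that the ``only if'' direction needs dual attainment (not just a zero gap) is a fair and careful reading of the paper's standing assumption that strong duality holds.
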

\begin{proof}
These are the standard KKT conditions for \eqref{sdp}.
\end{proof}

\section{An augmented Lagrangian framework with the Burer-Monteiro factorization}\label{alfw}
In this section, we introduce an augmented Lagrangian framework combined with the Burer-Monteiro factorization for solving \eqref{sdp}.
Recall that the constraints $\mathcal{B}(X)=d$ in \eqref{sdp} define a certain manifold structure (which will be rigorously defined later). Let us denote
\begin{equation}\label{maniX}
\cM\coloneqq\{X\in\R^{n\times n}\mid\mathcal{B}(X)=d,X\succeq0\}.
\end{equation}
Note that if the constraints $\mathcal{B}(X)=d$ are not present (i.e., $l=0$), then $\cM=\S^+_n$. Other typical choices of $\cM$ are
\begin{align}
\cM&=\{X\in\R^{n\times n}\mid X\succeq0,\Tr(X)=1\},\tag{Unit-trace}\label{maniX1}\\
\cM&=\{X\in\R^{n\times n}\mid X\succeq0,\diag(X)=\mathbf{1}\}.\tag{Unit-diagonal}\label{maniX2}
\end{align}
\begin{remark}
For the case that $X$ has a constant trace $c$, we can scale $X$ by the factor $\frac{1}{c}$ to match the \eqref{maniX1} case.
\end{remark}
Then \eqref{sdp} can be equivalently written as
\begin{equation}\label{sdp1}
\begin{cases}
\inf\limits_{X\in\cM}&\langle C, X\rangle\\
\,\,\,\rm{s.t.}&\mathcal{A}(X)=b.
\end{cases}\tag{SDP-M}
\end{equation}

For $p\in[n]$, we denote by $\cN_p$ the set of matrices after applying the Burer-Monteiro factorization to $\cM$, i.e., 
\begin{equation}\label{maniY}
\cN_p\coloneqq\{Y\in\R^{n\times p}\mid YY^{\intercal}\in\cM\}=\{Y\in\R^{n\times p}\mid\mathcal{B}(YY^{\intercal})=d\}.
\end{equation}
We call $p$ the \emph{factorization size}.
For $\cM=\S^+_n$ or $\cM$ in \eqref{maniX1}, \eqref{maniX2}, the corresponding $\cN_p$ are
\begin{subequations}
\begin{align}
\cN_p&=\{Y\in\R^{n\times p}\},\tag{Euclidean}\label{maniYs1}\\
\cN_p&=\{Y\in\R^{n\times p}\mid \|Y\|=1\},\tag{Sphere}\label{maniYs2}\\
\cN_p&=\{Y\in\R^{n\times p}\mid \|Y(i,:)\|=1, i=1,\ldots,n\},\tag{Oblique}\label{maniYs3}
\end{align}
\end{subequations}
where $Y(i,:)$ stands for the $i$-th row of $Y$.

We further make the following assumptions on \eqref{sdp1}:

\begin{assumption}\label{assump1}
\eqref{sdp1} admits a low-rank optimal solution.
\end{assumption}

\begin{assumption}\label{assump2}
$\cN_p$ is a submanifold embedded in the Euclidean space $\R^{n\times p}$.
\end{assumption}

\begin{assumption}\label{assump3}
Either $l=0,1$, or the matrices $\{B_i\}_{i=1}^l$ satisfy $B_iB_j=0$ for any $1\le i\ne j\le l$.
\end{assumption}

\begin{assumption}\label{assump4}
$d_i\ne0$ for $i=1,\ldots,l$.
\end{assumption}

It is clear that for $\cN_p$ in \eqref{maniYs1} or \eqref{maniYs2} with $p\ge1$, or $\cN_p$ in \eqref{maniYs3} with $p\ge2$, Assumptions \ref{assump2}--\ref{assump4} are satisfied.

\begin{remark}
Assumptions \ref{assump3}--\ref{assump4} are not essential and can be weakened to the assumption that the matrices $\{B_iY\}_{i=1}^l$ are linearly independent in $\R^{n\times p}$ for all $Y\in\cN_p$; we refer the reader to \cite{boumal2020deterministic} for detailed discussions. In this paper, we utilize assumptions \ref{assump3}--\ref{assump4} to obtain a simple closed form of the dual variables $z$ associated to the manifold constraints.
\end{remark}



The augmented Lagrangian function associated with \eqref{sdp1} is defined by
\begin{equation}\label{alf}
L_{\sigma}(X,y)=\langle C, X\rangle-y^{\intercal}(\mathcal{A}(X)-b)+\frac{\sigma}{2}\|\mathcal{A}(X)-b\|^2.
\end{equation}
As for usual constrained optimization problems, \eqref{sdp1} can be solved within an inexact ALM which is presented in Algorithm \ref{alg0} (cf. \cite{wang2023decomposition}).

\begin{algorithm}
\renewcommand{\algorithmicrequire}{\textbf{Input:}}
\renewcommand{\algorithmicensure}{\textbf{Output:}}
\caption{Inexact ALM template}\label{alg0}
	\begin{algorithmic}[1]
   	\REQUIRE $\cA,b,\cB,d,C,\sigma_0>0,\sigma_{\min}>0,\sigma_{\max}>0$
   	\ENSURE $(X,y,S)$
		\STATE $k \gets 0$, $y^0 \gets 0$;
		\WHILE{stopping criteria do not fulfill}
		\STATE Solve the ALM subproblem \eqref{subpX} inexactly to obtain an approximate minimizer $X^{k+1}$;
		\STATE $y^{k+1} \gets y^k-\sigma_k(\mathcal{A}(X^{k+1})-b)$;
            \STATE $S^{k+1}\gets\nabla\Phi_k(X^{k+1})-\mathcal{B}^*(z)$, where $z$ is given by \eqref{multiplierX} with $X\coloneqq X^{k+1}$;
            \STATE Determine $\sigma_{k+1}\in[\sigma_{\min},\sigma_{\max}]$ according to some policy;
		\STATE $k \gets k+1$;
		\ENDWHILE
		\RETURN $(X^{k},y^{k},S^{k})$;
	\end{algorithmic}
\end{algorithm}

At the $k$-th iteration of Algorithm \ref{alg0}, we need to solve the ALM subproblem
\begin{equation}\label{subpX}
\min_{X\in\cM}\Phi_k(X)\coloneqq L_{\sigma_{k}}(X,y^{k}).
\end{equation}
The following lemma characterizes the optimality conditions of \eqref{subpX}.
\begin{lemma}\label{lm:suboptimality}
A matrix $X\in\cM$ is a minimizer of \eqref{subpX} if and only if there exist Lagrange multipliers $z\in\R^l$ such that
\begin{subequations}
\begin{align}
S\coloneqq \nabla\Phi_k(X)-\mathcal{B}^*(z)&\succeq0,\label{optimality3}\\ 
XS&=0, \label{optimality4}
\end{align}
\end{subequations}
where $\nabla\Phi_k(X)=C+\sigma_k\mathcal{A}^*(\mathcal{A}(X)-b-y^k/\sigma_k)$.
Furthermore, under Assumptions \ref{assump3}--\ref{assump4}, the vector $z$ is unique and has a closed-form expression:
\begin{equation}\label{multiplierX}
z_i\coloneqq\frac{\Tr(B_i\nabla\Phi_k(X)X)}{\Tr(B_i^2X)}\text{ for }i=1,\ldots,l.
\end{equation}
\end{lemma}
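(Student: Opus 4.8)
The plan is to treat \eqref{subpX} as a smooth optimization problem on the PSD manifold-with-constraints $\cM$ and write down its KKT/first-order optimality conditions, then specialize. First I would recall that $\cM = \{X\in\S_n\mid \cB(X)=d,\ X\succeq0\}$ and that $\Phi_k$ is a smooth (in fact quadratic) convex function of $X$ with Euclidean gradient $\nabla\Phi_k(X)=C+\sigma_k\cA^*(\cA(X)-b-y^k/\sigma_k)$, which is immediate from \eqref{alf} by differentiating the quadratic penalty term. Since \eqref{subpX} is a linearly-constrained convex problem (minimizing a convex quadratic over the intersection of an affine subspace with the PSD cone), the KKT conditions are both necessary and sufficient for global optimality; this is the same reasoning as in Lemma~\ref{lm:optimality}, now applied with objective $\Phi_k$ in place of $\langle C,X\rangle$ and with the affine constraints $\cA(X)=b$ absorbed into the objective. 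Introducing a multiplier $z\in\R^l$ for $\cB(X)=d$ and using conic duality for the constraint $X\succeq0$ yields exactly: dual feasibility $S\coloneqq\nabla\Phi_k(X)-\cB^*(z)\succeq0$, primal feasibility $X\in\cM$, and complementary slackness $XS=0$, which is \eqref{optimality3}--\eqref{optimality4}.

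For the second part — uniqueness and the closed form of $z$ — I would argue as follows. Given a minimizer $X$, complementary slackness $XS=0$ forces $X\big(\nabla\Phi_k(X)-\cB^*(z)\big)=0$, i.e. $X\nabla\Phi_k(X)=X\cB^*(z)=\sum_{i=1}^l z_i\,X B_i$. Now I would left-multiply by $B_i$ and take traces: $\Tr\!\big(B_i X\nabla\Phi_k(X)\big)=\sum_{j=1}^l z_j\,\Tr(B_i X B_j)=\sum_{j=1}^l z_j\,\Tr(B_i B_j X)$. Under Assumption~\ref{assump3}, $B_iB_j=0$ for $i\ne j$ (the case $l=0$ being vacuous and $l=1$ trivial), so the sum collapses to the single term $z_i\,\Tr(B_i^2 X)$, giving $z_i=\Tr(B_iX\nabla\Phi_k(X))/\Tr(B_i^2X)$, which is \eqref{multiplierX}. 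This also proves uniqueness, since $z_i$ is pinned down by this identity. The one point requiring care is that the denominator $\Tr(B_i^2X)$ is nonzero: I would justify this using Assumption~\ref{assump4} ($d_i\ne0$) together with $\cB(X)=d$ — indeed $d_i=\Tr(B_iX)\ne0$ implies $B_iX\ne0$, and combined with $X\succeq0$ one can show $\Tr(B_i^2X)=\Tr((B_i X^{1/2})(B_i X^{1/2})^{\intercal})=\|B_iX^{1/2}\|_F^2>0$ (since $B_iX\ne0$ forces $B_iX^{1/2}\ne0$).

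The main obstacle is the rigorous derivation of the conic KKT conditions \eqref{optimality3}--\eqref{optimality4} themselves: strictly speaking one needs a constraint qualification for the semidefinite program \eqref{subpX} so that the multipliers exist. Here I would either invoke the standing strong-duality assumption for \eqref{sdp} (which transfers to \eqref{subpX} since the feasible set and the PSD structure are unchanged and the objective is merely shifted by an affine-quadratic term), or simply note — as the authors do for Lemma~\ref{lm:optimality} — that these are the standard KKT conditions for a convex SDP and cite that fact. Everything else (computing $\nabla\Phi_k$, the trace manipulation, checking the denominator) is routine linear algebra once Assumptions~\ref{assump3}--\ref{assump4} are in force.
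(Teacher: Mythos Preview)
Your proposal is correct and follows essentially the same route as the paper: invoke convexity to equate optimality with the KKT conditions \eqref{optimality3}--\eqref{optimality4}, then from $XS=0$ multiply by $B_i$, use $B_iB_j=0$ and take traces to isolate $z_i$, checking the denominator via a square-root factorization of $X$. The only cosmetic difference is that the paper writes $X=YY^{\intercal}$ with $Y\in\cN_p$ and argues $\Tr(B_i^2X)=\|B_iY\|_F^2\ne0$, whereas you use $X^{1/2}$ to the same effect.
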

\begin{proof}
As \eqref{subpX} is convex, $X$ is a minimizer if and only if the KKT conditions hold, i.e., there exists $z\in\R^l$ such that
\begin{subequations}
\begin{align}
\mathcal{B}(X)&=d, \label{optimality1}\\
X&\succeq0, \label{optimality2}\\
S=\nabla\Phi_k(X)-\mathcal{B}^*(z)&\succeq0,\\
XS=X(\nabla\Phi_k(X)-\mathcal{B}^*(z))&=0.
\end{align}
\end{subequations}
Since $X\in\cM$, \eqref{optimality1} and \eqref{optimality2} are valid by definition, and so $X$ is a minimizer if and only if \eqref{optimality3} and \eqref{optimality4} hold for some $z\in\R^l$.
To get the closed-form expression of $z$, we note that $XS=0$
can be rewritten as
\begin{equation}\label{sec3:eq1}
\nabla\Phi_k(X)X-\sum_{j=1}^lB_jXz_j=0.
\end{equation}
For each $i\in[l]$, multiplying \eqref{sec3:eq1} by $B_i$ and noting $B_iB_j=0$ for $i\ne j$, we get
\begin{equation}\label{sec3:eq3}
B_i\nabla\Phi_k(X)X-B_i^2Xz_i=0.
\end{equation}
Let $X=YY^{\intercal}$ for some $Y\in\cN_p$. We have
\begin{equation*}
\Tr(B_i^2X)=\Tr(Y^{\intercal}B_i^2Y)=\|B_iY\|^2\ne0,
\end{equation*}
where the last inequality is because $B_iY$ cannot be a zero matrix since $d_i\ne0$.
Then \eqref{multiplierX} follows by taking the trace of \eqref{sec3:eq3}.
\end{proof}

\begin{remark}
For $\cM$ in \eqref{maniX1}, \eqref{multiplierX} becomes
\begin{equation}\label{sec4:eq3}
z=\Tr(\nabla\Phi_k(X)X),
\end{equation}
and for $\cM$ in \eqref{maniX2}, \eqref{multiplierX} becomes
\begin{equation}\label{sec4:eq2}
z=\diag(\nabla\Phi_k(X)X).
\end{equation}
\end{remark}


To exploit the fact that \eqref{sdp1} admits a low-rank optimal solution (Assumption \ref{assump1}), we now apply the Burer-Monteiro factorization to $\Phi_k(X)$ and define $\Psi_k(Y)=\Phi_k(YY^{\intercal})$ for $Y\in\R^{n\times p}$. Consequently, the convex subproblem \eqref{subpX} becomes the non-convex factorized subproblem:
\begin{equation}\label{subpY}
\min_{Y\in\cN_p}\Psi_k(Y)=\langle C, YY^{\intercal}\rangle-(y^k)^{\intercal}(\mathcal{A}(YY^{\intercal})-b)+\frac{\sigma_k}{2}\|\mathcal{A}(YY^{\intercal})-b\|^2.\tag{ALMS-$k$}
\end{equation}
Then, we solve the non-convex factorized subproblem \eqref{subpY} on the manifold $\cN_p$ (Assumption \ref{assump2}) instead of solving the convex subproblem \eqref{subpX} on $\cM$. 

\section{A manifold optimization approach}\label{subproblem}
Since $\cN_p$ is assumed to be a smooth manifold, we can solve the non-convex factorized subproblem \eqref{subpY} by off-the-shelf efficient manifold optimization methods. Here, we choose the Riemannian trust-region method. We elaborate on the rationale behind this choice as follows. Existing work \cite{cui2019r,liao2025inexact} has demonstrated that the inexact ALM for solving linear SDPs can achieve linear convergence, provided that the ALM subproblem is solved with sufficient accuracy. This finding indicates that to retain the fast linear convergence of the inexact ALM and to obtain optimal solutions of high accuracy, it is more preferable to employ a second-order method for solving the ALM subproblem. Among second-order methods for smooth optimization\footnote{Since we focus on linear SDPs in this paper, the augmented Lagrangian function is a smooth function and it is then more natural to employ a smooth optimization algorithm for solving the ALM subproblem. That is why we choose the Riemannian trust-region method rather than the Riemannian semismooth Newton method method (which is designed for nonsmooth optimization) proposed in \cite{wang2023decomposition}.}, we adopt the Riemannian trust-region method as it not only guarantees global convergence but also exhibits a superlinear (or even quadratic) local convergence rate \cite{AbsBakGal2007-FoCM}, thereby combining robustness with exceptional computational efficiency in ALM implementations.

\subsection{The Riemannian trust-region method}
In this subsection, we calculate the ingredients that are necessary to perform optimization on the manifold $\cN_p$ via the Riemannian trust-region method. For a detailed introduction to the Riemannian trust-region method, we refer the reader to \cite{AbsBakGal2007-FoCM}. First of all, we note that at a point $Y\in\cN_p$, the tangent space of $\cN_p$ is given by
\begin{equation}
    T_Y\cN_p=\{U\in\R^{n\times p}\mid\cB(UY^{\intercal})=0\},
\end{equation}
and the normal space to $\cN_p$ is given by
\begin{equation}
    N_Y\cN_p=\left\{\cB^*(u)Y=\sum_{i=1}^lu_iB_iY\middle| u\in\R^l\right\}.
\end{equation}

\begin{lemma}\label{sec4:lm1}
Let Assumption \ref{assump3} hold. Let $Y$ be a point on $\cN_p$. The orthogonal projector $P_Y:\R^{n\times p}\rightarrow T_Y\cN_p$ is given by
\begin{equation}
    P_Y(U)=U-\cB^*(u)Y=U-\sum_{i=1}^lu_iB_iY, \text{ for }U\in\R^{n\times p},
\end{equation}
where $u\in\R^l$ is determined by
\begin{equation}\label{sec4:eq5}
    u_i=\frac{\Tr(B_iUY^{\intercal})}{\Tr(B_i^2YY^{\intercal})},\text{ for }i=1,\ldots,l. 
\end{equation}
\end{lemma}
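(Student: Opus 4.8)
The plan is to use the fact that, for an embedded submanifold, the tangent space $T_Y\cN_p$ and the normal space $N_Y\cN_p$ are orthogonal complements of one another in $\R^{n\times p}$, so that computing $P_Y(U)$ amounts to writing $U=V+W$ with $V\in T_Y\cN_p$ and $W\in N_Y\cN_p$ and setting $P_Y(U)=V$. First I would rewrite the defining equations of $T_Y\cN_p$ in terms of the Frobenius inner product: since $B_i=B_i^{\intercal}$, the cyclic property of the trace gives $\Tr(B_iUY^{\intercal})=\langle B_iY,U\rangle$, hence $T_Y\cN_p=\{U\in\R^{n\times p}\mid\langle B_iY,U\rangle=0,\ i=1,\dots,l\}=\mathrm{span}\{B_1Y,\dots,B_lY\}^{\perp}=(N_Y\cN_p)^{\perp}$. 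This identifies $N_Y\cN_p$ (which by hypothesis is $\{\cB^*(u)Y\mid u\in\R^l\}$) as the orthogonal complement of $T_Y\cN_p$, and reduces the task to solving a finite-dimensional normal-equation system.

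Next I would set $W=\cB^*(u)Y=\sum_{j=1}^lu_jB_jY$ for an unknown $u\in\R^l$ and impose $U-W\in T_Y\cN_p$, i.e.\ $\langle B_iY,\,U-\sum_{j}u_jB_jY\rangle=0$ for every $i$. Expanding gives the linear system $\sum_{j}\langle B_iY,B_jY\rangle\,u_j=\langle B_iY,U\rangle$ with $\langle B_iY,B_jY\rangle=\Tr(B_iB_jYY^{\intercal})$. This is where Assumption \ref{assump3} enters: the cases $l=0,1$ are trivial, while for $l\ge2$ we have $B_iB_j=0$ for $i\ne j$, so the Gram matrix is diagonal and the system decouples into $u_i\,\Tr(B_i^2YY^{\intercal})=\Tr(B_iUY^{\intercal})$. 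It then only remains to see that the denominator is nonzero, which is exactly the computation already carried out in the proof of Lemma \ref{lm:suboptimality}: writing $X=YY^{\intercal}$ one has $\Tr(B_i^2YY^{\intercal})=\Tr(Y^{\intercal}B_i^2Y)=\|B_iY\|_F^2$, and $B_iY\ne0$ because otherwise $d_i=\langle B_i,YY^{\intercal}\rangle$ would vanish, contradicting Assumption \ref{assump4}. Thus $u_i=\Tr(B_iUY^{\intercal})/\Tr(B_i^2YY^{\intercal})$ is well defined and matches \eqref{sec4:eq5}.

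Finally I would assemble the pieces: for this choice of $u$, the vector $W=\cB^*(u)Y$ lies in $N_Y\cN_p$ and $U-W$ lies in $T_Y\cN_p$; since $N_Y\cN_p$ is finite-dimensional (hence closed) and equals $(T_Y\cN_p)^{\perp}$, the decomposition $U=(U-W)+W$ is the orthogonal one, whence $P_Y(U)=U-W=U-\cB^*(u)Y$, as claimed. The only step with any content is the decoupling of the normal equations into a diagonal system; once Assumptions \ref{assump3}--\ref{assump4} are invoked everything else is routine, so I do not expect a real obstacle here.
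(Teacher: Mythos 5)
Your proposal is correct and follows essentially the same route as the paper: write $P_Y(U)=U-\cB^*(u)Y$ using the tangent--normal decomposition, impose $\cB(P_Y(U)Y^{\intercal})=0$, and use $B_iB_j=0$ (Assumption \ref{assump3}) to decouple the resulting linear system into \eqref{sec4:eq5}. The only additions are your explicit verification that $N_Y\cN_p=(T_Y\cN_p)^{\perp}$ and that the denominator $\|B_iY\|_F^2$ is nonzero via Assumption \ref{assump4}, both of which the paper leaves implicit (the latter appearing in the proof of Lemma \ref{lm:suboptimality}).
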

\begin{proof}
Any matrix $U\in\R^{n\times p}$ admits a unique decomposition $U=U_{T_Y\cN_p}+U_{N_Y\cN_p}$,
where $U_{\mathcal{X}}$ belongs to the space $\mathcal{X}$. So the projection $P_Y(U)$ can be assumed to be of the form $P_Y(U)=U-\cB^*(u)Y$ for some $u\in\R^l$. Since $P_Y(U)\in T_Y\cN_p$, it holds
\begin{equation*}
    \cB(P_Y(U)Y^{\intercal})=\cB(UY^{\intercal})-\cB(\cB^*(u)YY^{\intercal})=0,
\end{equation*}
i.e., $\Tr(B_iUY^{\intercal})=\Tr(B_i\cB^*(u)YY^{\intercal})$ for $i=1,\ldots,l$. This yields \eqref{sec4:eq5} as $B_iB_j=0$ for $i\ne j$ by Assumption \ref{assump3}.
\end{proof}

\begin{proposition}[cf. \cite{wang2023decomposition}, Proposition 2.3]\label{prop}
Consider the non-convex factorized subproblem \eqref{subpY}. Let $X=YY^{\intercal}$, $\widetilde{S}=\nabla\Phi_k(X)$ and $S=\widetilde{S}-\cB^*(z)$ with $z$ being given in \eqref{multiplierX}. Then the \emph{Riemannian gradient} at $Y$ is given by
\begin{equation}\label{sec4:eq6}
\grad\,\Psi_k(Y)=2SY.
\end{equation}
For $U\in T_{Y}\cN_p$, let $\widetilde{H}=\nabla^2\Psi_k(Y)[U]=2(\widetilde{S}U+\sigma_k\cA^*(\cA(YU^{\intercal}+UY^{\intercal}))Y)$. Then the \emph{Riemannian Hessian} is given by
\begin{equation}\label{sec4:eq7}
\hess\,\Psi_k(Y)[U]=P_Y(\widetilde{H})-2\cB^*(z)U+2\cB^*(u)Y,
\end{equation}
where
\begin{equation}
    u_i=\frac{\Tr(B_i^2UY^{\intercal})z_i}{\Tr(B_i^2X)},\text{ for }i=1,\ldots,l. 
\end{equation}
In particular, for $\cN_p$ in \eqref{maniYs1}, we have
\begin{equation}
\hess\,\Psi_k(Y)[U]=\widetilde{H};
\end{equation}
for $\cN_p$ in \eqref{maniYs2}, we have
\begin{equation}
\hess\,\Psi_k(Y)[U]=\widetilde{H}-\Tr(\widetilde{H}Y^{\intercal})Y-2\Tr(\widetilde{S}X)U;
\end{equation}
for $\cN_p$ in \eqref{maniYs3}, we have
\begin{equation}
\hess\,\Psi_k(Y)[U]=\widetilde{H}-\Diag(\widetilde{H}Y^{\intercal})Y-2\Diag(\widetilde{S}X)U.
\end{equation}
\end{proposition}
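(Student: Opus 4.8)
The plan is to derive the Riemannian gradient and Hessian formulas by combining the standard formulas for embedded submanifolds of Euclidean space with the explicit projector from Lemma~\ref{sec4:lm1} and the unconstrained (Euclidean) derivatives of $\Psi_k$. First I would compute the Euclidean gradient of $\Psi_k(Y)=\Phi_k(YY^\intercal)$: since $\Phi_k$ is a quadratic in $X=YY^\intercal$ with $\nabla\Phi_k(X)=\widetilde{S}$, the chain rule gives $\nabla\Psi_k(Y)=2\widetilde{S}Y$. The Riemannian gradient is then the orthogonal projection $\grad\,\Psi_k(Y)=P_Y(2\widetilde{S}Y)$; substituting $U=2\widetilde{S}Y$ into Lemma~\ref{sec4:lm1} produces $u_i=\Tr(B_i\widetilde{S}YY^\intercal)/\Tr(B_i^2X)$, which are exactly $2z_i$ with $z_i$ as in~\eqref{multiplierX}. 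Hence $P_Y(2\widetilde{S}Y)=2\widetilde{S}Y-2\cB^*(z)Y=2(\widetilde{S}-\cB^*(z))Y=2SY$, giving~\eqref{sec4:eq6}.

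For the Hessian, I would use the general formula for an embedded submanifold: for $U\in T_Y\cN_p$,
\begin{equation*}
\hess\,\Psi_k(Y)[U]=P_Y\bigl(\nabla^2\Psi_k(Y)[U]\bigr)+P_Y\bigl(\mathfrak{D}_U(N)\bigr),
\end{equation*}
where the second term accounts for the variation of the normal component of $\grad\,\Psi_k$ along $U$; equivalently, one differentiates the identity $\grad\,\Psi_k(Y)=\nabla\Psi_k(Y)-\cB^*(z(Y))Y$ and projects. Concretely, I would write $\grad\,\Psi_k(Y)=2SY=2\widetilde{S}Y-2\cB^*(z)Y$ and differentiate in the direction $U$: the term $2\widetilde{S}Y$ contributes $\nabla^2\Psi_k(Y)[U]=\widetilde{H}$ after recording that $\widetilde{S}$ itself depends on $Y$ through $\cA(YY^\intercal)$, which is where the $\sigma_k\cA^*(\cA(YU^\intercal+UY^\intercal))Y$ piece enters. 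The term $-2\cB^*(z)Y$ contributes $-2\cB^*(z)U-2\cB^*(\mathfrak{D}_U z)Y$; since $\cB^*(\mathfrak{D}_U z)Y$ lies in the normal space, projecting kills it, but we must re-add the projection-induced correction. I would then invoke Lemma~\ref{sec4:lm1} once more to compute $P_Y(\widetilde{H})$, and carefully track the normal-space bookkeeping so that the leftover terms assemble into $-2\cB^*(z)U+2\cB^*(u)Y$ with $u_i=\Tr(B_i^2UY^\intercal)z_i/\Tr(B_i^2X)$. The key computation is differentiating the closed-form multiplier~\eqref{multiplierX} along $U\in T_Y\cN_p$; using $\cB(UY^\intercal)=0$ (tangency) and Assumption~\ref{assump3} ($B_iB_j=0$), many cross terms vanish, leaving precisely the stated $u_i$.

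The three special cases then follow by specialization. For~\eqref{maniYs1} ($\cM=\S_n^+$, so $l=0$), there is no constraint, $\cB\equiv0$, and $\hess=\nabla^2\Psi_k(Y)[U]=\widetilde{H}$. For~\eqref{maniYs2} (the sphere, $l=1$, $B_1=I$, $d_1=1$), we have $\cB^*(z)=zI$ with $z=\Tr(X\widetilde{S})=\Tr(\widetilde{S}X)$ from~\eqref{sec4:eq3}, $\Tr(B_1^2X)=\|Y\|_F^2=1$, and $u_1=\Tr(UY^\intercal)z$; plugging in and using $P_Y(\widetilde{H})=\widetilde{H}-\Tr(\widetilde{H}Y^\intercal)Y$ (projector for the sphere) gives $\hess\,\Psi_k(Y)[U]=\widetilde{H}-\Tr(\widetilde{H}Y^\intercal)Y-2\Tr(\widetilde{S}X)U$. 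For~\eqref{maniYs3} (the oblique manifold, $B_i=e_ie_i^\intercal$), the same substitution with $\cB^*(z)=\Diag(z)$, $z=\diag(X\widetilde{S})$ from~\eqref{sec4:eq2}, and $P_Y(\widetilde{H})=\widetilde{H}-\Diag(\widetilde{H}Y^\intercal)Y$ yields the $\Diag$-version.

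I expect the main obstacle to be the normal-space bookkeeping in the general Hessian: one must correctly combine the abstract formula $\hess\,f(Y)[U]=P_Y(\nabla^2 f(Y)[U])+\mathfrak{W}_Y(U,P_{N_Y}(\nabla f(Y)))$ (Weingarten map term) with the explicit projector, and verify that the Weingarten contribution reproduces exactly the extra terms $-2\cB^*(z)U+2\cB^*(u)Y$ rather than something differing by a tangent vector. An alternative, perhaps cleaner, route I would consider is to differentiate $P_Y$ itself: since $\grad\,\Psi_k(Y)=P_Y(\nabla\Psi_k(Y))$, one has $\hess\,\Psi_k(Y)[U]=P_Y\bigl(\mathfrak{D}_U[\,Y\mapsto P_Y(\nabla\Psi_k(Y))\,]\bigr)$, and expanding the derivative of $P_Y(U)=U-\sum_i u_i(Y,U)B_iY$ with respect to $Y$ in the direction $U$ gives all the required pieces directly; the quotient rule applied to~\eqref{sec4:eq5} is then the only nontrivial calculation, and Assumption~\ref{assump3} guarantees the denominators are well-defined and decoupled.
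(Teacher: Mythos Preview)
Your approach is essentially the paper's: compute $\nabla\Psi_k(Y)=2\widetilde{S}Y$, project via Lemma~\ref{sec4:lm1} to obtain the gradient, and for the Hessian use the Weingarten formula $\hess f(Y)[U]=P_Y(\nabla^2 f(Y)[U])+\mathfrak{A}_Y(U,P_Y^{\perp}(\nabla f(Y)))$ (the paper cites \cite{absil2013extrinsic} for exactly this), then specialize. One small correction to your narrative: the coefficients $u_i$ do \emph{not} come from differentiating the multiplier $z$---as you yourself note, $\cB^*(\mathfrak{D}_Uz)Y$ is normal and is annihilated by $P_Y$---but rather from projecting the surviving term $\cB^*(z)U$: by Lemma~\ref{sec4:lm1} one has $P_Y(\cB^*(z)U)=\cB^*(z)U-\cB^*(u)Y$, and Assumption~\ref{assump3} collapses $B_i\cB^*(z)=z_iB_i^2$ to give the stated $u_i$. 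With that relocation of the ``key computation,'' your outline matches the paper line for line.
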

\begin{proof}
First note $\nabla\Psi_k(Y)=2\widetilde{S}Y$. By (3) of \cite{absil2013extrinsic}, $\grad\,\Psi_k(Y)=P_Y(\nabla\Psi_k(Y))$ and thus we can write
\begin{equation*}
\grad\,\Psi_k(Y)=2\widetilde{S}Y-2\cB^*(z)Y
\end{equation*}
for some $z\in\R^l$. As $\grad\,\Psi_k(Y)\in T_Y\cN_p$, we have
\begin{equation}\label{sec4:eq4}
\cB(\grad\,\Psi_k(Y)Y^{\intercal})=2\cB((\widetilde{S}-\cB^*(z))X)=0.
\end{equation}
Solving \eqref{sec4:eq4} for $z$ we get exactly \eqref{multiplierX} and \eqref{sec4:eq6} then follows.

By (10) of \cite{absil2013extrinsic}, it holds
\begin{equation}
    \hess\,\Psi_k(Y)[U]=P_Y(\nabla^2\Psi_k(Y)[U])+\mathfrak{A}_Y(U,P_Y^{\perp}(\nabla\Psi_k(Y))),
\end{equation}
where $\mathfrak{A}_Y$ is the Weingarten map at $Y$ and $P_Y^{\perp}=I-P_Y$ is the orthogonal projector at $Y$ to $N_Y\cN_p$. Let $D_Y(\cdot)[U]$ be the directional derivative at $Y$ along $U$.
Then we have
\begin{align*}
\mathfrak{A}_Y(U,P_Y^{\perp}(\nabla\Psi_k(Y)))&=\mathfrak{A}_Y(U,2\cB^*(z)Y)\\
&=-P_Y(D_Y(2\cB^*(z)Y)[U])\\
&=-2P_Y(\cB^*(z)U)-2P_Y(\cB^*(D_{Y}(z)[U])Y)\\
&=-2\cB^*(z)U+2\cB^*(u)Y,
\end{align*}
where we have used the fact that $P_Y(\cB^*(z)U)=\cB^*(z)U-\cB^*(u)Y$ and $P_Y(\cB^*(u')Y)=0$ for any $u'\in\R^l$. \eqref{sec4:eq7} then follows.

The remaining conclusions of the proposition can be easily verified.
\end{proof}

The global optimality condition of \eqref{subpY} can be characterized in terms of positive semidefiniteness of the matrix $S$ (cf. \cite[Theorem 4]{journee2010low}).

\begin{proposition}\label{sec4:thm1}
Let $Y\in\cN_p$, $X=YY^{\intercal}\in\cM$, and $z$ be given by \eqref{multiplierX}. Let $S=\nabla\Phi_k(X)-\mathcal{B}^*(z)$. Then a stationary point $Y$ of the non-convex factorized subproblem \eqref{subpY} is a global minimizer if and only if $S\succeq0$.
\end{proposition}
\begin{proof}
Note that $Y$ is a global minimizer of \eqref{subpY} if and only if $X$ is a minimizer of the convex subproblem \eqref{subpX}. The fact that $Y$ is a stationary point implies $\grad\,\Psi_k(Y)=2SY=0$ and, consequently, $XS=0$. The conclusion then follows from Lemma \ref{lm:suboptimality}.
\end{proof}

As a corollary of Proposition \ref{sec4:thm1}, we obtain the following theorem.
\begin{theorem}\label{sec4:thm3}
A stationary point $Y\in\cN_p$ of the non-convex factorized subproblem \eqref{subpY} provides a minimizer $X=YY^{\intercal}$ of \eqref{sdp1} if and only if $\mathcal{A}(X)=b$ and $S=\nabla\Phi_k(X)-\mathcal{B}^*(z)\succeq0$ with $z$ being given by \eqref{multiplierX}.
\end{theorem}
\begin{proof}
This is immediate from Proposition \ref{sec4:thm1}.
\end{proof}

\subsection{Escaping from saddle points}\label{sec4-2}
Since the subproblem \eqref{subpY} is non-convex, in order to solve \eqref{subpY} to certain optimality, it is then crucial to design a strategy for escaping from saddle points. We next show that we can always compute a second-order descent direction $U$ to escape from saddle points whenever $S=\nabla\Phi_k(X)-\mathcal{B}^*(z)\nsucceq0$.

\begin{lemma}\label{sec4:lm2}
For any $U\in\R^{n\times p}$ satisfying $YU^{\intercal}=0$, it holds 
\begin{equation}
    \langle U,\hess\,\Psi_k(Y)[U]\rangle=2\Tr(U^{\intercal}SU).
\end{equation}
\end{lemma}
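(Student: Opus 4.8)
The plan is to start from the explicit formula for the Riemannian Hessian given in Proposition~\ref{prop}, namely $\hess\,\Psi_k(Y)[U]=P_Y(\widetilde H)-2\cB^*(z)U+2\cB^*(u)Y$, and to pair it against $U$ using the standard trace inner product. The key simplifying input is the hypothesis $YU^{\intercal}=0$ (equivalently $U^{\intercal}Y=0$), which I expect to kill several terms. First I would show that such a $U$ automatically lies in $T_Y\cN_p$: indeed $\cB(UY^{\intercal})$ has $i$-th component $\Tr(B_iUY^{\intercal})=\Tr(B_i\cdot 0)=0$, so $P_Y(U)=U$ and the projection does nothing to $U$ itself. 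Next I would use self-adjointness of the orthogonal projector $P_Y$ to move it off $\widetilde H$ and onto $U$: $\langle U,P_Y(\widetilde H)\rangle=\langle P_Y(U),\widetilde H\rangle=\langle U,\widetilde H\rangle$.

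Then I would expand $\langle U,\widetilde H\rangle$ with $\widetilde H=2(\widetilde S U+\sigma_k\cA^*(\cA(YU^{\intercal}+UY^{\intercal}))Y)$. The second piece vanishes: $YU^{\intercal}+UY^{\intercal}=0$ by hypothesis, so $\cA$ of it is zero and the whole term drops. This leaves $\langle U,\widetilde H\rangle=2\langle U,\widetilde S U\rangle=2\Tr(U^{\intercal}\widetilde S U)$. For the remaining two terms in the Hessian formula I would argue: $\langle U,\cB^*(u)Y\rangle=\Tr(U^{\intercal}\cB^*(u)Y)=\Tr(\cB^*(u)YU^{\intercal})=\Tr(\cB^*(u)\cdot 0)=0$ again by $YU^{\intercal}=0$; and $\langle U,\cB^*(z)U\rangle=\Tr(U^{\intercal}\cB^*(z)U)=\sum_i z_i\Tr(U^{\intercal}B_iU)$. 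Combining, $\langle U,\hess\,\Psi_k(Y)[U]\rangle=2\Tr(U^{\intercal}\widetilde S U)-2\sum_i z_i\Tr(U^{\intercal}B_iU)=2\Tr\big(U^{\intercal}(\widetilde S-\cB^*(z))U\big)=2\Tr(U^{\intercal}SU)$, which is the claim.

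As a sanity check I would verify the three special cases in Proposition~\ref{prop} directly. For \eqref{maniYs1} ($\cB$ absent) the claim is just $\langle U,\widetilde H\rangle=2\Tr(U^{\intercal}\widetilde S U)$, which follows from the vanishing of the $\cA^*$ term as above. For \eqref{maniYs2} and \eqref{maniYs3}, the extra correction terms $-\Tr(\widetilde H Y^{\intercal})Y$ (or its $\Diag$ analogue) and $-2\Tr(\widetilde S X)U$ (or its $\Diag$ analogue) should contribute, respectively, something proportional to $\Tr(UY^{\intercal})=0$ and a term that is exactly absorbed into rewriting $\widetilde S$ as $S=\widetilde S-\cB^*(z)$ with the appropriate scalar/diagonal $z$ from \eqref{sec4:eq3}--\eqref{sec4:eq2}; I would check that the bookkeeping matches but expect no surprises.

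The computation is essentially routine, so there is no serious obstacle; the only point requiring a little care is the handling of the $\cB$-related terms, i.e.\ making sure that the Weingarten/normal-space corrections $-2\cB^*(z)U+2\cB^*(u)Y$ collapse to exactly $-2\cB^*(z)U$ after pairing with $U$ (the $\cB^*(u)Y$ term dying because of $YU^{\intercal}=0$), and then recognizing $\widetilde S-\cB^*(z)=S$. One should also note that no assumption beyond $YU^{\intercal}=0$ is needed — in particular Assumption~\ref{assump3} is not invoked here, since the $u$-term drops out before its structure matters.
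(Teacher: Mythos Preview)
Your proof is correct and follows essentially the same route as the paper: both start from the Hessian formula of Proposition~\ref{prop}, observe that $YU^{\intercal}=0$ forces $U\in T_Y\cN_p$ and kills the $\cA^*$ term in $\widetilde H$, then eliminate the $\cB^*(\cdot)Y$ contributions via $U^{\intercal}Y=0$ and recognize $\widetilde S-\cB^*(z)=S$. The only cosmetic difference is that the paper expands $P_Y(\widetilde H)=\widetilde H-\cB^*(u')Y$ explicitly via Lemma~\ref{sec4:lm1}, whereas you use self-adjointness of $P_Y$ together with $P_Y(U)=U$ to bypass that expansion; both arrive at the same conclusion.
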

\begin{proof}
The assumption $YU^{\intercal}=0$ implies $U\in T_{Y}\cN_p$. By \eqref{sec4:eq7} and Lemma \ref{sec4:lm1}, $\widetilde{H}=2\widetilde{S}U$ with $\widetilde{S}=\nabla\Phi_k(X)$ and
\begin{align*}
\hess\,\Psi_k(Y)[U]&=2P_Y(\widetilde{S}U)-2\cB^*(z)U+2\cB^*(u)Y\\
&=2\widetilde{S}U-2\cB^*(u')Y-2\cB^*(z)U+2\cB^*(u)Y\\
&=2SU+2\cB^*(u-u')Y
\end{align*}
for $z,u$ in Proposition \ref{prop} and some $u'\in\R^l$. Thus,
\begin{align*}
\langle U,\hess\,\Psi_k(Y)[U]\rangle&=2\Tr(U^{\intercal}SU)+2\Tr(U^{\intercal}\cB^*(u-u')Y)=2\Tr(U^{\intercal}SU).
\end{align*}
\end{proof}

\begin{theorem}\label{sec5:thm1}
Suppose $S=\nabla\Phi_k(X)-\mathcal{B}^*(z)\nsucceq0$ with $z$ being given by \eqref{multiplierX}. Let $\delta\in\N$ be a positive number and let $V\in\R^{n\times\delta}$ be a matrix whose columns consist of eigenvectors corresponding to negative eigenvalues of $S$. Then $U\coloneqq[0_{n\times p}, V]$ is a second-order descent direction of \eqref{subpY} with $p\coloneqq p+\delta$ at the point $Y\coloneqq[Y, 0_{n\times \delta}]$, namely, $U$ satisfies
\begin{equation}
\langle U,\grad\,\Psi_k(Y)\rangle=0,\quad\langle U,\hess\,\Psi_k(Y)[U]\rangle<0.
\end{equation}
\end{theorem}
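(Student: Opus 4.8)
The plan is to reduce both identities to results already in hand---the Riemannian gradient formula \eqref{sec4:eq6} of Proposition \ref{prop} and the curvature identity of Lemma \ref{sec4:lm2}---but applied on the enlarged manifold $\cN_{p+\delta}$ rather than on $\cN_p$. Throughout, write $\bar Y\coloneqq[Y,0_{n\times\delta}]$ and $\bar U\coloneqq[0_{n\times p},V]$ (these are the ``$Y$'' and ``$U$'' appearing in the statement, with the enlarged factorization size $p+\delta$). The preliminary observation is that the zero-padding changes nothing essential: $\bar Y\bar Y^{\intercal}=YY^{\intercal}=X$, hence $\mathcal{B}(\bar Y\bar Y^{\intercal})=\mathcal{B}(X)=d$ and $\bar Y\in\cN_{p+\delta}$; and since the multiplier $z$ in \eqref{multiplierX} depends on $X$ alone, $S=\nabla\Phi_k(X)-\mathcal{B}^*(z)$ is literally the matrix occurring in the hypothesis. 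A one-line block computation gives $\bar U\bar Y^{\intercal}=[0_{n\times p},V]\,[Y,0_{n\times\delta}]^{\intercal}=0$, so in particular $\mathcal{B}(\bar U\bar Y^{\intercal})=0$, i.e., $\bar U\in T_{\bar Y}\cN_{p+\delta}$, and the hypothesis $\bar U\bar Y^{\intercal}=0$ required to apply Lemma \ref{sec4:lm2} at $\bar Y$ holds.

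For the gradient condition, \eqref{sec4:eq6} gives $\grad\,\Psi_k(\bar Y)=2S\bar Y$, so using the cyclic property of the trace together with $\bar Y\bar U^{\intercal}=(\bar U\bar Y^{\intercal})^{\intercal}=0$,
\begin{equation*}
\langle\bar U,\grad\,\Psi_k(\bar Y)\rangle=2\Tr(\bar U^{\intercal}S\bar Y)=2\Tr(S\bar Y\bar U^{\intercal})=0.
\end{equation*}
For the curvature condition, Lemma \ref{sec4:lm2} applied at $\bar Y$ with $\bar U$ gives $\langle\bar U,\hess\,\Psi_k(\bar Y)[\bar U]\rangle=2\Tr(\bar U^{\intercal}S\bar U)=2\Tr(V^{\intercal}SV)$. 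Writing $v_1,\ldots,v_\delta$ for the columns of $V$, each a nonzero eigenvector of $S$ for some negative eigenvalue $\lambda_j$, we get $\Tr(V^{\intercal}SV)=\sum_{j=1}^{\delta}v_j^{\intercal}Sv_j=\sum_{j=1}^{\delta}\lambda_j\|v_j\|^2<0$, whence $\langle\bar U,\hess\,\Psi_k(\bar Y)[\bar U]\rangle<0$.

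I do not expect a real obstacle: once one notices that $X$, $z$, and $S$ are unchanged by the zero-padding, the rest is short bookkeeping with block matrices and the cyclic trace identity. Two points are worth flagging. First, for the statement to be non-vacuous the hypothesis must be read as $S\nsucceq0$ (so that $S$ has at least $\delta\ge1$ negative eigenvalues and a nonzero $V$ is available), and it is precisely the negative definiteness of $S$ on the column space of $V$ that produces the strict inequality $\langle\bar U,\hess\,\Psi_k(\bar Y)[\bar U]\rangle<0$. Second, Lemma \ref{sec4:lm2} and Proposition \ref{prop} are being invoked on $\cN_{p+\delta}$ instead of on $\cN_p$; this is legitimate exactly because $\bar U\bar Y^{\intercal}=0$ and $\bar Y\bar Y^{\intercal}=X$, so both hypotheses of those results transfer verbatim to the enlarged point.
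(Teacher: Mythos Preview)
Your proposal is correct and follows essentially the same route as the paper's proof: verify $\bar U\bar Y^{\intercal}=0$, apply the gradient formula \eqref{sec4:eq6} together with the cyclic trace to get the first identity, and invoke Lemma \ref{sec4:lm2} to reduce the curvature term to $2\Tr(V^{\intercal}SV)<0$. Your write-up is in fact more careful than the paper's (you explicitly check $\bar Y\in\cN_{p+\delta}$, that $z$ and $S$ are unchanged under zero-padding, and you correctly flag that the hypothesis should read $S\nsucceq0$).
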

\begin{proof}
By construction, we have $YU^{\intercal}=0$. Therefore, by \eqref{sec4:eq6},
\begin{equation*}
\langle U,\grad\,\Psi_k(Y)\rangle=2\Tr(U^{\intercal}SY)=2\Tr(SYU^{\intercal})=0.
\end{equation*}
By Lemma \ref{sec4:lm2},
\begin{align*}
\langle U,\hess\,\Psi_k(Y)[U]\rangle=2\Tr(U^{\intercal}SU)=2\Tr(V^{\intercal}SV)<0.
\end{align*}
\end{proof}

\begin{remark}
The fact that an eigenvector corresponding to a negative eigenvalue of $S$ yields a second-order descent direction of \eqref{subpY} was first observed in \cite{journee2010low} where only one eigenvector corresponding to the smallest eigenvalue was used. In \cite{wang2023decomposition}, the authors used eigenvectors corresponding to all negative eigenvalues to obtain a descent direction.
\end{remark}

The following theorem adapted from \cite{journee2010low} (see also \cite{wang2023decomposition} for an extension to the nonsmooth case) tells us that by escaping from saddle points, we are capable of finding a global minimizer of the non-convex factorized subproblem \eqref{subpY}.
\begin{theorem}[\cite{journee2010low}, Theorem 7]\label{sec4:thm2}
A second-order critical point $Y\in\cN_p$ of the non-convex factorized subproblem \eqref{subpY} provides a minimizer $X=YY^{\intercal}$ of the convex subproblem \eqref{subpX} if it is rank deficient, i.e., $\rank\,Y<p$.
\end{theorem}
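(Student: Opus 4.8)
The plan is to combine the two preceding results — Theorem \ref{sec5:thm1} (a negative eigenvalue of $S$ yields a second-order descent direction) and Proposition \ref{sec4:thm1} (global optimality of \eqref{subpY} is equivalent to $S\succeq0$) — and show that rank deficiency is exactly what rules out the bad case. First I would set $X=YY^{\intercal}$, let $z$ be given by \eqref{multiplierX}, and put $S=\nabla\Phi_k(X)-\mathcal{B}^*(z)$. By Proposition \ref{sec4:thm1} it suffices to prove $S\succeq0$. Suppose for contradiction that $S$ has a negative eigenvalue with unit eigenvector $v\in\R^n$, so $Sv=\lambda v$ with $\lambda<0$.

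Next I would exploit rank deficiency. Since $\rank\,Y<p$, the rows of $Y^{\intercal}$ (equivalently the columns of $Y$) do not span $\R^p$; more to the point, the $n\times p$ matrix $Y$ has a nontrivial left-kernel behavior in the sense that there is a unit vector $w\in\R^p$ with $Yw=0$. Form the rank-one direction $U\coloneqq vw^{\intercal}\in\R^{n\times p}$. Then $YU^{\intercal}=Y(wv^{\intercal})=(Yw)v^{\intercal}=0$, so by Lemma \ref{sec4:lm2} we are in exactly the situation covered there: $U\in T_Y\cN_p$ and
\[
\langle U,\hess\,\Psi_k(Y)[U]\rangle=2\Tr(U^{\intercal}SU)=2\Tr(wv^{\intercal}Svw^{\intercal})=2\lambda\,\Tr(ww^{\intercal})=2\lambda<0.
\]
Likewise, using \eqref{sec4:eq6}, $\langle U,\grad\,\Psi_k(Y)\rangle=2\Tr(U^{\intercal}SY)=2\Tr(SYU^{\intercal})=0$ (or one notes directly that a second-order critical point has zero Riemannian gradient). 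Hence $U$ is a genuine tangent direction along which $\Psi_k$ has strictly negative second-order variation while the first-order variation vanishes — contradicting the assumption that $Y$ is a second-order critical point of \eqref{subpY}. Therefore $S\succeq0$, and Proposition \ref{sec4:thm1} (or Corollary \ref{sec3:thm1}) gives that $X=YY^{\intercal}$ is a minimizer of \eqref{subpX}.

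The main obstacle — really the only subtle point — is making sure the descent direction $U$ built from the rank deficiency actually lies in the tangent space $T_Y\cN_p$ and that the second-order criticality of $Y$ genuinely precludes $\langle U,\hess\,\Psi_k(Y)[U]\rangle<0$; both are handled cleanly by the identity $YU^{\intercal}=0$ feeding into Lemma \ref{sec4:lm2}, which is why that lemma was isolated. One should also double-check that the retraction used by the Riemannian trust-region method is such that second-order criticality has the standard meaning ($\grad\,\Psi_k(Y)=0$ and $\hess\,\Psi_k(Y)\succeq0$ on $T_Y\cN_p$); for a submanifold embedded in Euclidean space with the induced metric this is the usual notion, so no extra work is needed. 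Everything else is a routine trace manipulation.
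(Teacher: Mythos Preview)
Your proof is correct and follows essentially the same route as the paper: both exploit rank deficiency to produce a direction $U$ with $YU^{\intercal}=0$, invoke Lemma~\ref{sec4:lm2} to identify $\langle U,\hess\,\Psi_k(Y)[U]\rangle$ with $2\Tr(U^{\intercal}SU)$, and conclude $S\succeq0$ via Proposition~\ref{sec4:thm1}. The only cosmetic difference is that the paper argues directly---writing $Y=Y'P^{\intercal}$, taking $U=VP_{\perp}^{\intercal}$ for arbitrary $V\in\R^{n\times(p-r)}$, and deducing $\Tr(V^{\intercal}SV)\ge0$ for all $V$---whereas you argue by contradiction with the rank-one choice $U=vw^{\intercal}$; your version is a touch more elementary but otherwise identical in substance.
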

\begin{proof}
Let $\rank\,Y=r<p$. We can write $Y=Y'P^{\intercal}$ for some full-rank matrices $Y'\in\R^{n\times r}$ and $P\in\R^{p\times r}$. Let $P_{\perp}\in\R^{p\times(p-r)}$ be the matrix whose columns form an orthogonal basis of the orthogonal complement of the column space of $P$ such that $P^{\intercal}P_{\perp}=0$. Let $V\in\R^{n\times (p-r)}$ be an arbitrary matrix and let $U=VP_{\perp}^{\intercal}$. We have $UY^{\intercal}=VP_{\perp}^{\intercal}P(Y')^{\intercal}=0$. As $Y$ is a second-order critical point of the non-convex factorized subproblem \eqref{subpY}, $\hess\,\Psi_k(Y)$ is positive semidefinite. Then by Lemma \ref{sec4:lm2}, we have
\begin{equation*}
    0\le\langle U,\hess\,\Psi_k(Y)[U]\rangle=2\Tr(U^{\intercal}SU)=2\Tr(V^{\intercal}SV).
\end{equation*}
The arbitrarity of $V$ implies that $S$ is positive semidefinite and so by Proposition \ref{sec4:thm1}, $Y\in\cN_p$ is a global minimizer of \eqref{subpY}, which further implies that $X=YY^{\intercal}$ is a minimizer of \eqref{subpX}.
\end{proof}

\subsection{Global convergence}
We now establish the global convergence of the inexact ALM for solving \eqref{sdp1} assuming that the ALM subproblem \eqref{subpY} is solved to certain optimality. 

\begin{theorem}\label{gc-thm}
Let $\{\varepsilon_k\}_{k\in\N},\{\tau_k\}_{k\in\N}\subseteq\R^+$ satisfy $\sum_{i=0}^{\infty}\varepsilon_k<\infty$ and $\sum_{i=0}^{\infty}\tau_k<\infty$. Suppose that we use the following stopping criteria for the ALM subproblem \eqref{subpY}:
\begin{equation}\label{stopcri}
    \|\grad\,\Psi_k(Y)\|\le\varepsilon_k\text{ and }\lambda_{\min}(S)\ge-\tau_k,
\end{equation}
where $S$ is the dual variable to $X$ defined as in Proposition \ref{prop}. Assume that the sequence $\{Y^{k}\}_{k\in\N}$ is bounded. Let $(\hat{X},\hat{y},\hat{S})$ be a limit point of $\{(X^k,y^k,S^k)\}_{k\ge1}$. Then $(\hat{X},\hat{y},\hat{S})$ is a KKT point of \eqref{sdp1}, i.e., it satisfies \eqref{optimality} with $\hat{z}$ being given by
\begin{equation*}
\hat{z}_i\coloneqq\frac{\Tr(B_i(C-\cA^*(\hat{y}))\hat{X})}{\Tr(B_i^2\hat{X})}\text{ for }i=1,\ldots,l.
\end{equation*}
\end{theorem}

For the proof, we need the following two lemmas.

\begin{lemma}\label{sec5:lm1}
Suppose that $Y^{k+1}$ is an approximate minimizer of the ALM subproblem \eqref{subpY} so that the criteria \eqref{stopcri} is fulfilled.
Let $X^{k+1}\coloneqq Y^{k+1}(Y^{k+1})^{\intercal}$ and $(X^{\star},y^{\star},S^{\star})$ be a KKT point of \eqref{sdp1}. Then
\begin{equation}\label{sec5:eq5}
    \langle y^k-y^{\star}, \mathcal{A}(X^{k+1})-b\rangle\ge \sigma_k\|\mathcal{A}(X^{k+1})-b\|^2-\tau_k\Tr(X^{\star})-\frac{\varepsilon_k}{2}\|Y^{k+1}\|.
\end{equation}
\end{lemma}
\begin{proof}
By Proposition \ref{prop}, we have $S^{k+1}Y^{k+1}=\frac{1}{2}\cdot\grad\,\Psi_k(Y^{k+1})$ and so it holds $\langle S^{k+1}Y^{k+1}, Y^{k+1}\rangle\le\frac{1}{2}\cdot\|\grad\,\Psi_k(Y^{k+1})\|\cdot\|Y^{k+1}\|$.
Thus from the stopping criteria \eqref{stopcri}, we obtain
\begin{equation}\label{sec5:eq2}
    \langle S^{k+1}, X^{\star}-X^{k+1}\rangle=\langle S^{k+1}, X^{\star}\rangle-\langle S^{k+1}Y^{k+1}, Y^{k+1}\rangle\ge -\tau_k\Tr(X^{\star})-\frac{\varepsilon_k}{2}\|Y^{k+1}\|.
\end{equation}
Substituting $C-\cA^*(y^k-\sigma_k(\mathcal{A}(X^{k+1})-b))-\cB^*(z^{k+1})$ for $S^{k+1}$ in \eqref{sec5:eq2} where $z^{k+1}$ is given by \eqref{multiplierX} with $X\coloneqq X^{k+1}$, we obtain
\begin{align}\label{sec5:eq3}
    &\langle C-\cA^*(y^k-\sigma_k(\mathcal{A}(X^{k+1})-b))-\cB^*(z^{k+1}), X^{\star}-X^{k+1}\rangle \notag\\
    =\,&\langle C, X^{\star}-X^{k+1}\rangle-\langle y^k-\sigma_k(\mathcal{A}(X^{k+1})-b), \cA(X^{\star})-\cA(X^{k+1})\rangle \notag\\ &\quad\quad\quad\quad\quad\quad\quad\quad\quad\quad\quad\quad\quad\quad\quad\quad\quad-\langle z^{k+1}, \cB(X^{\star})-\cB(X^{k+1})\rangle \notag\\
    =\,&\langle C, X^{\star}-X^{k+1}\rangle+\langle y^k-\sigma_k(\mathcal{A}(X^{k+1})-b), \cA(X^{k+1})-b\rangle \notag\\
    \ge\,& -\tau_k\Tr(X^{\star})-\frac{\varepsilon_k}{2}\|Y^{k+1}\|,
\end{align}
where the second equality is because $\cA(X^{\star})=b$ and $\cB(X^{\star})=\cB(X^{k+1})=d$.
On the other hand, we have
\begin{equation*}
\langle S^{\star},X^{k+1}-X^{\star}\rangle=\langle C-\cA^*(y^{\star})-\cB^*(z^{\star}), X^{k+1}-X^{\star}\rangle=\langle S^{\star},X^{k+1}\rangle\ge0,
\end{equation*}
which gives
\begin{equation}\label{sec5:eq4}
\langle C, X^{k+1}-X^{\star}\rangle-\langle y^{\star}, \cA(X^{k+1})-b\rangle\ge0.
\end{equation}
Summing \eqref{sec5:eq3} and \eqref{sec5:eq4} gives \eqref{sec5:eq5} as desired.
\end{proof}

\begin{lemma}\label{sec5:lm2}
Assume that $\{Y^{k}\}_{k\in\N}$ is bounded. The sequence $\{\cA(X^k)\}_{k\ge1}$ converges to $b$.
\end{lemma}
\begin{proof}
Let $(X^{\star},y^{\star},S^{\star})$ be a KKT point of \eqref{sdp1}. For all $k\ge1$, by invoking Lemma \ref{sec5:lm1}, we have
\begin{align*}
\|y^{k+1}-y^{\star}\|^2&=\|y^{k}-y^{\star}\|^2-2\sigma_k\langle y^{k}-y^{\star}, \cA(X^{k+1})-b\rangle+\sigma_k^2\|\cA(X^{k+1})-b\|^2\\
&\le\|y^{k}-y^{\star}\|^2+\sigma_k(2\tau_k\Tr(X^{\star})+\varepsilon_k\|Y^{k+1}\|)-\sigma_k^2\|\cA(X^{k+1})-b\|^2.
\end{align*}
For arbitrary $N\ge1$, summing the above inequality for $k=1,\ldots,N$, we have
\begin{align*}
0&\le\sum_{k=1}^N\left(\|y^{k}-y^{\star}\|^2-\|y^{k+1}-y^{\star}\|^2\right)-\sum_{k=1}^N\sigma_k^2\|\cA(X^{k+1})-b\|^2\\
&\quad\quad\quad\quad\quad\quad\quad\quad\quad\quad\quad\quad\quad\quad\quad\quad+\sum_{k=1}^N\sigma_k\left(2\tau_k\Tr(X^{\star})+\varepsilon_k\|Y^{k+1}\|\right)\\
&\le\|y^{1}-y^{\star}\|^2-\|y^{N+1}-y^{\star}\|^2-\sigma_{\min}^2\sum_{k=1}^N\|\cA(X^{k+1})-b\|^2\\
&\quad\quad\quad\quad\quad\quad\quad\quad\quad\quad\quad\quad\quad\quad\quad+2\sigma_{\max}\Tr(X^{\star})\sum_{k=1}^N\tau_k+\sigma_{\max}D\sum_{k=1}^N\varepsilon_k,
\end{align*}
where $D=\max\,\{\|Y^{k+1}\|:k\ge1\}$. It follows
\begin{equation}\label{sec5:eq6}
    \sigma_{\min}\sum_{k=1}^N\|\cA(X^{k+1})-b\|^2\le\|y^{1}-y^{\star}\|^2+2\sigma_{\max}\Tr(X^{\star})\sum_{k=1}^N\tau_k+\sigma_{\max}D\sum_{k=1}^N\varepsilon_k.
\end{equation}
Now because the right-hand side of \eqref{sec5:eq6} is bounded and $N$ is arbitrary, we see that $\cA(X^k)$ must converge to $b$.
\end{proof}

{\em Proof of Theorem \ref{gc-thm}.} Let $(X^{\star},y^{\star},S^{\star})$ be a KKT point of \eqref{sdp1}. Assume $\hat{X}=\lim_{i\rightarrow\infty}X^{k_i}$. By Lemma \ref{sec5:lm2}, $\hat{X}$ is feasible to \eqref{sdp1}. Substituting $k_i$ for $k$ in \eqref{sec5:eq3} and then letting $i\rightarrow\infty$, by virtue of Lemma \ref{sec5:lm2} we obtain $\langle C, X^{\star}-\hat{X}\rangle\ge0$, which implies $\langle C, \hat{X}\rangle\le\langle C, X^{\star}\rangle$. Therefore, $\hat{X}$ is an optimal solution of \eqref{sdp1}. Because $\lim_{k\rightarrow\infty}\tau_k=0$, we must have $\hat{S}\succeq0$. Moreover, noting $\langle S^k,X^k\rangle=|\langle S^kY^k,Y^k\rangle|\le\frac{\varepsilon_{k-1}}{2}\cdot\|Y^k\|$, since $\lim_{k\rightarrow\infty}\varepsilon_k=0$ and $\{Y^{k}\}_{k\in\N}$ is bounded, we have $\langle \hat{X},\hat{S}\rangle=0$. It follows that $(\hat{X},\hat{y},\hat{S})$ is a KKT point of \eqref{sdp1}.

\begin{remark}
It could be seen from the above proof that Theorem 4.9 remains valid with a fixed penalty parameter $\sigma$. However, in the next section we will describe an adaptive strategy for updating the penalty parameter in order to improve the performance of the algorithm.
\end{remark}

\begin{remark}
It is clear that if the manifold $N_p$ is bounded (e.g., a sphere or oblique manifold), then $\{Y^k\}$ must be bounded. For the general case, it was established in \cite{rockafellar1976augmented} that the dual iterate by the ALM coincides with the proximal update on the dual function by the proximal point method, from which the boundedness of the primal sequence $\{X^k\}$ (and hence $\{Y^k\}$) can be deduced under mild conditions (see Proposition 2(c) of \cite{liao2025inexact}).
\end{remark}

\section{The practical algorithm}\label{algorithms}
Before giving the practical algorithm, we first elaborate the strategies for adjusting the factorization size $p$ and the penalty parameter $\sigma$ which are crucial to enhancing the practical performance of the algorithm.

\subsection{Dynamically adjusting the factorization size $p$}\label{sec5-1}
The computational complexity of the ALM subproblem \eqref{subpY} heavily depends on the factorization size $p$. To minimize the computational burden, we propose an effective strategy for dynamically adjusting the value of $p$ inspired by \cite{wang2023decomposition}. The recipe behind the strategy is based on two key ingredients: (1) increasing $p$ to escape from saddle points as discussed in Section \ref{sec5-1}; (2) decreasing $p$ by estimating ranks so that the computational burden of the ALM subproblem is as low as possible.
More specifically, suppose that $\{s_i\}_{i}$ are the singular values of $Y$ sorted from large to small. Then the rank of $Y$ is estimated by
\begin{equation}
    r = \argmax_i\,\{i\mid s_i>\theta s_1\},
\end{equation}
provided some threshold $\theta\in(0,1)$. Once the estimated rank $r$ of $Y$ is determined, we are able to construct a rank-$r$ approximation of $Y$ as follows. Suppose that $Y$ has the singular value decomposition 
\begin{equation*}
    Y=WDV^{\intercal},
\end{equation*}
where the diagonal of $D$ is sorted from large to small. We then take 
\begin{equation*}
    Y'=W_rD_r,
\end{equation*}
as a rank-$r$ approximation of $Y$, where $W_r$ is the submatrix of $W$ consisting of the first $r$ columns and $D_r$ is the upper-left $r\times r$ submatrix of $D$. Let 
\begin{equation*}
    \delta=\min\,\{n_{\rm{ne}},\delta_{\rm{ne}}\},
\end{equation*}
where $n_{\rm{ne}}$ is the number of negative eigenvalues of $S$ and $\delta_{\rm{ne}}\in\N$ is a tunable parameter. We then update the factorization size $p$ by letting $p=r+\delta$ and accordingly let $Y=[Y', 0_{n\times\delta}]$. To obtain a descent direction, let $U=[0_{n\times r}, v_1, \cdots, v_{\delta}]$ where $v_1,\ldots,v_{\delta}$ are the eigenvectors corresponding to the $\delta$ smallest eigenvalues of $S$. To summarize, the size updating strategy operates through dual complementary mechanisms: (1) size reduction via truncated singular value decomposition on the matrix $Y$ (this makes $Y$ be of full rank); (2) size expansion when $S\nsucceq0$ in order to escape from saddle points. Therefore, when $S\nsucceq0$, we can compute a descent direction via size expansion; when $S\succeq0$, the ALM iterations drive $\|\mathcal{A}(X)-b\|\to0$ and $XS\to0$ so that the converging point $Y^*$ provides an optimal solution ($X^*=Y^*(Y^*)^{\intercal}$) of the original SDP.

\begin{remark}
A small $\delta_{\rm{ne}}$ typically makes the ALM to converge slowly whereas a large $\delta_{\rm{ne}}$ makes the factorization size $p$ to grow rapidly. Therefore, the value of the parameter $\delta_{\rm{ne}}$ should be chosen to balance these two aspects.
\end{remark}

\begin{remark}
The above strategy for adjusting the factorization size is adapted from \cite{wang2023decomposition} with two distinctions: (1) we rely on a different procedure to estimate ranks; (2) we introduce the parameter $\delta_{\rm{ne}}$ to control the maximum increment of the factorization size at each step.
\end{remark}

\subsection{Self-adaptively updating the penalty parameter $\sigma$}\label{sec5-2}
Now we describe the strategy of self-adaptively updating the penalty parameter $\sigma$. Unlike usual ALMs using a monotonically nondecreasing sequence of penalty parameters, our strategy allows one to self-adaptively increase or decrease the penalty parameter. More concretely, we propose the following updating rules:

\begin{equation}\label{sec5:eq1}
    \sigma_{k+1}=\begin{cases}
    \min\,\{\gamma\sigma_k,\sigma_{\max}\},&\text{ if }\|\mathcal{A}(X^{k+1})-b\|/(1 +\left\lVert b \right\rVert)>\tau\|\grad\,\Psi_k(Y^{k+1})\|,\\
    \max\,\{\sigma_k/\gamma,\sigma_{\min}\},&\text{ otherwise,}
    \end{cases}
\end{equation}
where $\gamma>1,\tau>0,\sigma_{\min},\sigma_{\max}>0$ are constants. The intuition behind \eqref{sec5:eq1} is the following: the inequality $\|\mathcal{A}(X^{k+1})-b\|/(1 +\left\lVert b \right\rVert)>\tau\|\grad\,\Psi_k(Y^{k+1})\|$ indicates that the progress of feasibility is not satisfactory and hence we increase the penalty parameter by setting $\sigma_{k+1}=\min\,\{\gamma\sigma_k,\sigma_{\max}\}$; otherwise, the progress of feasibility is satisfactory and we may decrease the penalty parameter by setting $\sigma_{k+1}=\max\,\{\sigma_k/\gamma,\sigma_{\min}\}$. In doing so, the penalty parameter will not become too large through the iterations of the algorithm. We point out that a large penalty parameter makes the ALM subproblem \eqref{subpY} more difficult to solve, and thus preventing the penalty parameter from becoming large would improve the performance of the algorithm. 

\subsection{The algorithm}
Our practical algorithm is presented below in Algorithm \ref{alg1}.

\begin{algorithm}
\renewcommand{\algorithmicrequire}{\textbf{Input:}}
\renewcommand{\algorithmicensure}{\textbf{Output:}}
\caption{ManiSDP}\label{alg1}
	\begin{algorithmic}[1]
   	\REQUIRE $\cA,b,\cB,d,C,\sigma_0>0,\sigma_{\min}>0$,$\sigma_{\max}>0$, $p_0\in\N$, $\gamma>1,\tau>0$, $\delta_{\rm{ne}}\in\N$, $\theta\in(0,1)$
   	\ENSURE $(X,y,S)$
		\STATE $k \gets 0$, $y^0 \gets 0$, $p\gets p_0$, $Y^0 \gets 0_{n\times p}$, $U \gets 0_{n\times p}$;
		\WHILE{stopping criteria do not fulfill}
		\STATE Solve the ALM subproblem \eqref{subpY} inexactly with $U$ to obtain an approximate minimizer $Y^{k+1}$;
		\STATE $X^{k+1} \gets Y^{k+1}(Y^{k+1})^{\intercal}$, $y^{k+1} \gets y^k-\sigma_k(\mathcal{A}(X^{k+1})-b)$;
            \STATE $z_i\gets \frac{\Tr\left(B_i\nabla\Phi_k\left(X^{k+1}\right)X^{k+1}\right)}{\Tr\left(B_i^2X^{k+1}\right)}$, $i=1,\ldots,l$;
            \STATE $S^{k+1} \gets C-\mathcal{A}^*(y^{k+1})-\mathcal{B}^*(z)$;
		\STATE Compute a descent direction $U$ from $S^{k+1}$ according to Theorem \ref{sec5:thm1};
		\STATE Update $p$ as described in Section \ref{sec5-1};
            \IF{$\|\mathcal{A}(X^{k+1})-b\|/(1 +\left\lVert b \right\rVert)>\tau\|\grad\,\Psi_k(Y^{k+1})\|$}
            \STATE $\sigma_{k+1} \gets \min\,\{\gamma\sigma_k,\sigma_{\max}\}$;
            \ELSE
            \STATE $\sigma_{k+1} \gets \max\,\{\sigma_k/\gamma,\sigma_{\min}\}$;
            \ENDIF
            \STATE $k \gets k+1$;
		\ENDWHILE
		\RETURN $(X^{k},y^{k},S^{k})$;
	\end{algorithmic}
\end{algorithm}

We now make a few remarks on Algorithm \ref{alg1}.
\begin{itemize}
    \item The initial value $p_0$ of the factorization size $p$ is typically set to $1$ or $2$. However, for large-scale SDPs, setting a larger $p_0$ could be more advantageous. The value of $\gamma$ is typically set to $2$. The optimal setting of the other parameters is highly problem-dependent.
    \item The values of $y^0$ and/or $Y^0$ can be provided according to some initial guess of optimal solutions for warm-starting which may further improve the performance of the algorithm.
    \item At Step 3, the ALM subproblem is inexactly solved by performing a fixed number of iterations with the Riemannian Trust-Region method along the descent direction $U$ computed at Step 7. So it is not guaranteed that the approximate minimizer of the ALM subproblem at step 3 would fulfill the stopping criteria \eqref{stopcri}. In other words, we solve the ALM subproblem without imposing optimality requirements. Intriguingly, global convergence is still observed in all numerical experiments presented in Section \ref{experiments}.
    \item Most computation of the algorithm could be performed with $Y^{k+1}$ to avoid forming the big matrix $X^{k+1}=Y^{k+1}(Y^{k+1})^{\intercal}$.
    \item At Step 7, we need to perform an eigenvalue decomposition at each outer iteration of the ALM. As mentioned earlier, the ALM can enjoy fast linear convergence when the Riemannian trust-region method is adopted to solve the ALM subproblem. For the numerical examples tested in this paper, we perform \emph{full} eigenvalue decomposition and the ALM typically returns an approximately optimal solution with KKT residues $<$1e-8 in a few tens of outer iterations, implying that only a few tens of eigenvalue decompositions are required. Therefore, the computational cost of (full) eigenvalue decomposition is manageable (at least for the SDP size investigated in this study). In its current form, the algorithm may not be suitable for large-scale problems where performing full eigenvalue decomposition is computationally prohibitive.
    Nonetheless, we could employ partial eigenvalue decomposition to further decrease the computational cost when tackling SDPs of larger size, and we leave its numerical implementation and detailed analysis in future work.
\end{itemize}

The following theorem provides a posterior guarantee of global optimality for the output of Algorithm \ref{alg1}.
\begin{theorem}\label{sec5:thm2}
If the sequence $\{(X^{k},y^{k},S^{k})\}_{k\in\N}$ generated by Algorithm \ref{alg1} converges so that $\lim_{k\rightarrow\infty}X^{k}=X^{\star}$, $\lim_{k\rightarrow\infty}y^{k}=y^{\star}$, and $\lim_{k\rightarrow\infty}S^{k}=S^{\star}$. Then $(X^{\star},y^{\star},S^{\star})$ is a KKT point of \eqref{sdp1}, i.e., it satisfies \eqref{optimality} with $z^{\star}$ being given by
\begin{equation*}
z^{\star}_i\coloneqq\frac{\Tr(B_i(C-\cA^*(y^{\star}))X^{\star})}{\Tr(B_i^2X^{\star})}\text{ for }i=1,\ldots,l.
\end{equation*}
\end{theorem}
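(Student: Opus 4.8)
The plan is to pass to the limit in the relations that Algorithm \ref{alg1} maintains at every iteration, and to use the convergence/boundedness hypotheses together with the characterization of global minimizers of the factorized subproblem from Proposition \ref{sec4:thm1} and Theorem \ref{sec4:thm3}. First I would establish primal feasibility of $X^\star$ with respect to the manifold constraint: since each $X^{k+1}=Y^{k+1}(Y^{k+1})^{\intercal}$ with $Y^{k+1}\in\cN_p$, we have $\cB(X^{k+1})=d$ and $X^{k+1}\succeq0$ for all $k$, and both properties are closed, so $\cB(X^\star)=d$ and $X^\star\succeq0$. Next, the constraint $\cA(X^\star)=b$: from Step 4, $\cA(X^{k+1})-b=(y^k-y^{k+1})/\sigma_k$; since $(y^k)$ is bounded and converges (the hypothesis gives $y^k\to y^\star$, so $y^k-y^{k+1}\to0$) and $\sigma_k\ge\sigma_{\min}>0$, the right-hand side tends to $0$, hence $\cA(X^\star)=b$.

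Then I would handle dual feasibility and complementarity via the PSD-ness of $S^\star$. The key point is that the subproblem solve in Step 3 produces $Y^{k+1}$ with $\|\grad\,\Psi_k(Y^{k+1})\|_F\le\epsilon_k\to0$, and moreover (by the saddle-point escaping mechanism of Section \ref{sec4-2} and Theorem \ref{sec4:thm2}, which guarantees that the Riemannian trust-region method returns a point that is not merely first-order but second-order critical, or rank-deficient) $Y^{k+1}$ is effectively a global minimizer of \eqref{subpY} in the limit, so by Proposition \ref{sec4:thm1} one gets $S^{k+1}\succeq0$ along the sequence — or at least the negative part of $S^{k+1}$ vanishes asymptotically because $\delta$ in Step 7 grows $p$ until no negative eigenvalue of $S^{k+1}$ survives. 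Passing to the limit, $S^\star\succeq0$. For complementarity $X^\star S^\star=0$: from $\grad\,\Psi_k(Y^{k+1})=2S^{k+1}Y^{k+1}$ (equation \eqref{sec4:eq6}) and $\|\grad\,\Psi_k(Y^{k+1})\|_F\le\epsilon_k$, we get $S^{k+1}Y^{k+1}\to0$, hence $S^{k+1}X^{k+1}=S^{k+1}Y^{k+1}(Y^{k+1})^{\intercal}\to0$ (using boundedness of $Y^{k+1}$, which follows from boundedness of $X^k$), so $S^\star X^\star=0$, and symmetrizing gives $X^\star S^\star=0$.

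Finally I would identify $S^\star$ with $C-\cA^*(y^\star)-\cB^*(z)$ for the claimed $z$. Taking $k\to\infty$ in Step 6, $S^{k+1}=C+\sigma_k\cA^*(\cA(X^{k+1})-b)-\cA^*(y^k)-\cB^*(z^{k+1})$; since $\sigma_k(\cA(X^{k+1})-b)=y^k-y^{k+1}\to0$ and $y^k\to y^\star$, the first correction term disappears and $\nabla\Phi_k(X^{k+1})=C+\sigma_k\cA^*(\cA(X^{k+1})-b)-\cA^*(y^k)\to C-\cA^*(y^\star)$; plugging this into the closed-form \eqref{multiplierX} for $z^{k+1}$ and using $\Tr(B_i^2X^{k+1})=\|B_iY^{k+1}\|_F^2$, which stays bounded away from $0$ in the limit by Assumption \ref{assump4} (as in the proof of Lemma \ref{lm:suboptimality}), gives $z^{k+1}\to z$ with the stated formula, whence $S^\star=C-\cA^*(y^\star)-\cB^*(z)$. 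Combining the five verified relations shows $(X^\star,y^\star,S^\star)$ satisfies \eqref{optimality}, and by Lemma \ref{lm:optimality} (equivalently Theorem \ref{sec4:thm3}) $X^\star$ is optimal for \eqref{sdp1}.

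The main obstacle I anticipate is making rigorous the claim that $S^{k+1}\succeq0$ in the limit — i.e. that the algorithm does not get stuck at a spurious saddle point. One must argue that the combination of (i) the trust-region method driving $\grad\,\Psi_k$ below $\epsilon_k$ and (ii) the rank-reduction/inflation rule of Step 7–8 (appending $\delta=\min\{n_{\mathrm{ne}},\delta_{\mathrm{ne}}\}$ eigenvectors whenever $S^{k+1}$ has negative eigenvalues, which strictly decreases $\Psi_k$ by Theorem \ref{sec5:thm1}) forces $\lambda_{\min}(S^{k+1})\to0^-$; here the boundedness of $(\Psi_k(Y^{k+1}))_k$ is exactly what prevents an infinite sequence of strict decreases and is used to close the argument. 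A careful treatment of how the descent direction $U$ carried into the next iterate interacts with the changing $p$ and $\sigma_k$ is the delicate bookkeeping step.
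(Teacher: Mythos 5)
Your verification of the KKT identities in the limit (complementarity via $\grad\,\Psi_k(Y^{k+1})=2S^{k+1}Y^{k+1}$ with $\epsilon_k\to0$, and the identification $S^\star=C-\cA^*(y^\star)-\cB^*(z)$ with the stated $z$, using $\sigma_k(\cA(X^{k+1})-b)=y^k-y^{k+1}\to0$ and $\Tr(B_i^2X^\star)>0$) is sound and in fact fills in what the paper dismisses as ``easily verified''. However, the central step of the theorem --- that $S^\star\succeq0$, equivalently that $X^\star$ is optimal --- is not proved in your proposal; it is only asserted, and the mechanism you invoke is not one the algorithm provides. The trust-region solve in Step 3 only enforces the gradient tolerance $\|\grad\,\Psi_k(Y^{k+1})\|_F\le\epsilon_k$; it does not make $Y^{k+1}$ a global minimizer of \eqref{subpY}, so ``$S^{k+1}\succeq0$ along the sequence'' is false in general (indeed, Step 7 exists precisely because $S^{k+1}$ typically has negative eigenvalues). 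Nor does Step 7 ``grow $p$ until no negative eigenvalue of $S^{k+1}$ survives'' within an iteration: it appends at most $\delta_{\rm ne}$ eigenvectors once and then moves to the next outer iteration with a new $y^k$ and $\sigma_k$, so the asymptotic vanishing of the negative part of $S^{k+1}$ is exactly the claim that needs proof, and you flag it as open. The paper closes this step differently, by contradiction at the limit: having shown $\cA(X^\star)=b$, it invokes Theorem \ref{sec4:thm3} to conclude that non-optimality of $X^\star$ forces $S^\star\nsucceq0$, and then Theorem \ref{sec5:thm1} yields a second-order descent direction at $Y^\star$, contradicting convergence of the iterates. Without this (or some rigorous substitute), your argument does not reach the conclusion.

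A secondary gap concerns feasibility. You derive $\cA(X^\star)=b$ from $\cA(X^{k+1})-b=(y^k-y^{k+1})/\sigma_k$ together with $y^k\to y^\star$; but convergence of $(y^k)$ is only introduced in the ``moreover'' clause, whereas the first assertion ($X^\star$ optimal) is claimed under mere boundedness of $(y^k)$ and $(\Psi_k(Y^{k+1}))$ --- under boundedness alone $y^k-y^{k+1}\not\to0$ in general, so your route proves a weaker statement. The paper instead uses the adaptive penalty rule: if $\|\cA(X^k)-b\|$ stayed bounded away from zero, Line 10 would fire infinitely often, $\sigma_k\to\infty$, and then
\begin{equation*}
\Psi_k(Y^{k+1})=\langle C,X^{k+1}\rangle+\frac{\sigma_k}{2}\Bigl\|\cA(X^{k+1})-b-\frac{y^k}{\sigma_k}\Bigr\|^2-\frac{\|y^k\|^2}{2\sigma_k}\to\infty,
\end{equation*}
contradicting the boundedness of $(\Psi_k(Y^{k+1}))_{k}$. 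This is where that hypothesis is actually consumed; in your proposal it is left essentially unused for feasibility and deferred to the (unfinished) saddle-point argument.
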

\begin{proof}
By the updating rule $y^{k+1}=y^k-\sigma_k(\mathcal{A}(X^{k+1})-b)$ and the convergence of $\{y^{k}\}_{k\in\N}$, we must have $\cA(X^{\star})=b$ and thus $X^{\star}$ is a feasible solution to \eqref{sdp1}. If $X^{\star}$ is not optimal, then by Theorem \ref{sec4:thm3}, we have $S^{\star}\nsucceq0$. So there exists a second-order descent direction for \eqref{subpY} starting from $Y^{\star}$ due to Theorem \ref{sec5:thm1}, which contradicts to the fact that $X^{\star}$ is the limit point of $\{X^{k}\}_{k\in\N}$. Thus $X^{\star}$ is optimal and $S^{\star}\succeq0$. Moreover, as $S^{\star}$ is the dual variable to $X^{\star}$, we have $\langle X^{\star}, S^{\star}\rangle=0$ by strong duality.
\end{proof}

\section{Numerical experiments}\label{experiments}
In this section, we conduct comprehensive numerical experiments to benchmark our solver {\tt ManiSDP} which implements Algorithm \ref{alg1} in MATLAB. In particular, {\tt Manopt 7.1} \cite{manopt} is employed by {\tt ManiSDP} to solve the Riemannian manifold optimization problem \eqref{subpY}. {\tt ManiSDP} is freely available at 
\begin{center}
\url{https://github.com/wangjie212/ManiSDP-matlab}.
\end{center}

{\bf Hardware.} All numerical experiments were performed on a desktop computer with Intel(R) Core(TM) i9-10900 CPU@2.80GHz and 64G RAM.

{\bf Baseline Solvers.} We compare the performance of {\tt ManiSDP} with that of four advanced SDP solvers: {\tt MOSEK 10.0}, {\tt SDPLR 1.03}, 
{\tt SDPNAL+}, {\tt STRIDE}. We explain why to choose these four baseline solvers: {\tt MOSEK} is chosen as it is a representative interior-point solver; {\tt SDPLR} is chosen as it is a representative solver that also exploits the low-rank property via the Burer-Monteiro factorization; {\tt SDPNAL+} is chosen as it is a representative solver that combines first-order with second-order methods and is designed to solve large-scale SDPs; {\tt STRIDE} is chosen as it specializes to solve large-scale SDP relaxations arising from polynomial optimization problems and exploits the rank-one property. It would be very interesting to compare also with the solver {\tt SDPDAL} of \cite{wang2023decomposition}, which is, however, currently impossible as {\tt SDPDAL} is not publicly available.
In the following, running time of solvers is measured in seconds; ``-" indicates that the solver encounters an out of memory error; ``$*$" indicates that running time exceeds $10,000$s; ``$**$" indicates that the solver returns certain numerical error. 

{\bf Stopping Criteria.} To measure the feasibility and optimality of an approximate solution $(X, y, S)\in \S^+_n\times\R^{m}\times\S^+_n $, we define the following KKT residues:
\begin{equation}\label{eq:res}
\eta_p = \frac{\left\lVert\cA(X)-b\right\rVert}{1 +\left\lVert b \right\rVert},\ \eta_d = \frac{\max\,\{0,-\lambda_{\min}(S)\}}{1+|\lambda_{\max}(S)|},\ \eta_g = \frac{|\left\langle C, X \right \rangle - b^{\intercal}y|}{1 + |\left\langle C, X \right \rangle| + |b^{\intercal}y|}.
\end{equation}
Given a tolerance $\text{tol}>0$, the SDP solver terminates when $\eta_{\max}\coloneqq\max\,\{\eta_p,\eta_d,\eta_g\}\leq\text{tol}$, and we set $\text{tol}=1\text{e-}8$ for all our experiments.

{\bf Benchmark Problems.}
To benchmark the solvers, we solve six classes of SDPs arising from different situations (the Max-Cut problem, the matrix completion problem, binary quadratic programs, minimizing quartic polynomials on the unit sphere, the robust rotation search problem, nearest structured rank deficient matrices), with a focus on second-order SDP relaxations for polynomial optimization problems as they are highly degenerate and are challenging for most SDP solvers.

\subsection{The Max-Cut problem}
The Max-Cut problem is one of the basic combinatorial optimization problems, which is known to be NP-complete. Suppose that $G(V, E)$ is an undirected graph with nodes $V=\{1,\ldots,N\}$ and with edge weights $w_{ij}=w_{ji}$ for $\{i,j\}\in E$. Then the Max-Cut problem for $G$, aiming to find the maximum cut, can be formulated as the following binary quadratic program:
\begin{equation}\label{maxcut}
    \begin{cases}
    \max&\frac{1}{2}\sum_{\{i,j\}\in E}w_{ij}(1-x_ix_j)\\
    \text{s.t.}&1-x_i^2=0,\quad i=1,\ldots,N.
    \end{cases}\tag{Max-Cut}
\end{equation}

To provide an upper bound on the maximum cut, we can consider the following SDP relaxation for \eqref{maxcut}:
\begin{equation}\label{maxcut2}
    \begin{cases}
    \max&\frac{1}{4}\langle L,X\rangle\\
    \text{s.t.}&X_{ii}=1,\quad i=1,\ldots,N,\\
    &X\succeq0,
    \end{cases}
\end{equation}
where $L\in\S_N$ is the Laplacian matrix of $G$, defined by
\begin{equation*}
    L_{ij}=\begin{cases}-w_{ij},&\text{if }\{i,j\}\in E,\\
    \sum_{k}w_{ik},&\text{if }i=j,\\
    0,&\text{otherwise}.
    \end{cases}
\end{equation*}
Note that \eqref{maxcut2} fits in \eqref{sdp1} with $m=0$ and $\cM$ matching \eqref{maniX2}.

We select test graphs from the webpage \url{https://web.stanford.edu/~yyye/yyye/Gset/} with $N$ varying from $800$ to $20000$.
For each instance, we solve \eqref{maxcut2} using the solvers {\tt MOSEK}, {\tt SDPLR}, {\tt SDPNAL+}, and {\tt ManiSDP}, respectively.
The results are presented in Table \ref{sec6:table:maxcut}. The following conclusions can be drawn from the table. (i) {\tt MOSEK} can solve the instances with $N\le10000$ to high accuracy, but the running time significantly grows as $N$ increases. When $N=20000$, {\tt MOSEK} runs out of space due to the large memory consumption of interior point methods. (ii) {\tt SDPNAL+} is very inefficient in solving this type of SDPs. For instance, when $N\ge3000$, {\tt SDPNAL+} needs over $10000$s to output the final result. (iii) Both {\tt SDPLR} and {\tt ManiSDP} can solve all instances to high accuracy, while {\tt ManiSDP} is even more accurate and is faster than {\tt SDPLR} by a factor of $2\sim10$. We refer the reader to \cite{boumal2016non,wang2023decomposition} for similar experiments on these graphs.

\begin{table}[htbp]
	\caption{Results for the Max-Cut problem.}\label{sec6:table:maxcut}
	\renewcommand\arraystretch{1.2}
	\centering
        \resizebox{\linewidth}{!}{
	\begin{tabular}{c|c|c|c|c|c|c|c|c|c}
		\multirow{2}{*}{graph}&\multirow{2}{*}{$N$}&\multicolumn{2}{c|}{{\tt MOSEK 10.0}}&\multicolumn{2}{c|}{{\tt SDPLR 1.03}}&\multicolumn{2}{c|}{{\tt SDPNAL+}}&\multicolumn{2}{c}{{\tt ManiSDP}}\\
		\cline{3-10}	&&$\eta_{\max}$&time&$\eta_{\max}$&time&$\eta_{\max}$&time&$\eta_{\max}$&time\\
		\hline
		G1&800&2.2e-09&3.51&7.4e-08&5.17&2.1e-09&52.2&1.6e-11&{\bf 0.54}\\
            \hline
		G2&800&2.7e-09&3.54&2.1e-07&3.29&3.5e-09&52.5&5.3e-14&{\bf 0.79}\\
            \hline
		G3&800&5.0e-09&3.50&2.1e-07&4.15&4.5e-09&38.6&4.3e-13&{\bf 0.81}\\
            \hline
		G4&800&2.6e-09&3.41&2.3e-07&3.14&2.4e-09&37.9&9.8e-13&{\bf 0.58}\\
            \hline
		G22&2000&1.0e-09&49.1&8.2e-08&12.5&8.6e-09&818&8.0e-12&{\bf 1.48}\\
            \hline
		G23&2000&1.7e-09&51.3&2.7e-07&22.4&2.3e-08&555&5.8e-12&{\bf 2.09}\\
            \hline
		G24&2000&9.2e-10&49.4&1.4e-06&6.95&4.3e-09&721&3.7e-12&{\bf 1.36}\\
            \hline
		G25&2000&1.3e-09&53.8&3.6e-07&10.4&2.8e-09&770&1.9e-12&{\bf 1.43}\\
            \hline
		G32&2000&2.3e-09&45.6&1.2e-07&22.9&7.0e-07&6463&1.6e-09&{\bf 4.34}\\
            \hline
		G43&1000&4.0e-09&6.31&7.3e-08&2.47&3.0e-09&59.1&2.7e-13&{\bf 0.68}\\
            \hline
            G44&1000&5.0e-09&6.27&3.1e-07&2.79&1.8e-08&61.2&4.2e-13&{\bf 0.62}\\
            \hline
            G45&1000&1.1e-09&6.34&2.0e-07&2.73&1.6e-08&61.6&1.5e-12&{\bf 0.59}\\
            \hline
		G48&3000&2.2e-09&108&1.1e-08&3.99&$*$&$*$&1.8e-17&{\bf 1.81}\\
            \hline
            G49&3000&3.0e-10&100&4.6e-08&4.21&$*$&$*$&1.1e-16&{\bf 1.94}\\
            \hline
            G50&3000&3.9e-14&112&3.0e-08&6.03&$*$&$*$&1.2e-14&{\bf 2.25}\\
            \hline
		G55&5000&3.7e-09&963&2.3e-07&34.5&$*$&$*$&6.6e-12&{\bf 17.7}\\
            \hline
            G56&5000&1.5e-09&847&7.5e-08&23.1&$*$&$*$&2.6e-12&{\bf 15.4}\\
            \hline
            G57&5000&7.0e-10&877&1.5e-07&120&$*$&$*$&5.0e-09&{\bf 30.8}\\
            \hline
            G58&5000&7.7e-10&1081&1.7e-07&101&$*$&$*$&1.2e-10&{\bf 29.7}\\
            \hline
            G59&5000&4.5e-09&931&8.3e-08&63.6&$*$&$*$&2.3e-13&{\bf 30.3}\\
            \hline
		G60&7000&1.1e-09&2722&7.5e-08&67.6&$*$&$*$&4.5e-12&{\bf 35.9}\\
            \hline
            G61&7000&4.6e-10&2735&1.0e-07&114&$*$&$*$&4.4e-10&{\bf 47.0}\\
            \hline
            G62&7000&2.2e-09&2484&3.1e-08&333&$*$&$*$&5.1e-09&{\bf 124}\\
            \hline
            G63&7000&2.6e-09&2978&3.2e-07&224&$*$&$*$&9.6e-09&{\bf 49.7}\\
            \hline
            G64&7000&1.2e-09&2886&5.8e-08&236&$*$&$*$&6.2e-09&{\bf 51.9}\\
            \hline
		G65&8000&1.5e-09&3794&4.6e-08&307&$*$&$*$&1.2e-09&{\bf 127}\\
            \hline
		G66&9000&2.6e-09&5464&5.7e-08&386&$*$&$*$&4.3e-09&{\bf 169}\\
            \hline
		G67&10000&4.6e-09&7363&4.1e-08&610&$*$&$*$&7.0e-09&{\bf 138}\\
            \hline
            G70&10000&4.1e-09&9451&3.1e-07&202&$*$&$*$&3.0e-12&{\bf 73.3}\\
            \hline
            G72&10000&9.7e-11&7728&1.5e-07&614&$*$&$*$&8.3e-09&{\bf 132}\\
            \hline
		G77&14000&$*$&$*$&6.9e-08&1177&$*$&$*$&1.9e-09&{\bf 452}\\
            \hline
		G81&20000&-&-&5.2e-08&3520&$*$&$*$&8.3e-09&{\bf 1934}\\
	\end{tabular}}
\end{table}

\subsection{The matrix completion problem}\label{mc}
The matrix completion problem seeks to recover a low-rank matrix $M\in\R^{s\times t}$ from a subset of entries $\{M_{ij}\}_{(i,j)\in\Omega}$. This can be formulized as the convex optimization problem:
\begin{equation}\label{sec6-eq6}
\begin{cases}
\inf\limits_{Z\in\R^{s\times t}}&\|Z\|_*\\
\,\,\,\,\,\,\rm{s.t.}&Z_{ij}=M_{ij},\quad\forall(i,j)\in\Omega,
\end{cases}\tag{MC}
\end{equation}
where $\|Z\|_*\coloneqq\Tr(Z^{\intercal}Z)^{\frac{1}{2}}$ is the nuclear norm of $Z$. Note that \eqref{sec6-eq6} can be equivalently cast as an SDP of size $(n,m)=(s+t,|\Omega|)$:
\begin{equation}\label{sec6-eq7}
\begin{cases}
\inf\limits_{X\in\S_n}&\Tr(X)\\
\,\,\,\rm{s.t.}&\left\langle\begin{bmatrix}0_{s\times s}&E_{ij}^{\intercal}\\ E_{ij}&0_{t\times t}\end{bmatrix},X\right\rangle=2M_{ij},\quad\forall(i,j)\in\Omega,\\
&X=\begin{bmatrix}U&Z^{\intercal}\\ Z&V\end{bmatrix}\succeq0,
\end{cases}
\end{equation}
where $E_{ij}$ is a $s\times t$ matrix with 1 at its $(i,j)$-position and 0 otherwise. Note also that \eqref{sec6-eq7} fits in \eqref{sdp1} with $\cM$ being the Euclidean manifold ($l=0$).
A famous result by Candes and Recht \cite{candes2012exact}, later improved by Candes and Tao \cite{candes2010power} states that, when $M$ is low-rank and incoherent, and the number of samples satisfies $|\Omega|\ge Cn(\log n)^2$ with some constant $C$, then $M$ can be exactly recovered by solving \eqref{sec6-eq7}. In this subsection, we consider random instances of the matrix completion problem \eqref{sec6-eq6}. To this end, we select $\Omega\subseteq[s]\times[t]$ uniformly at random from all subsets with cardinality $m$, and set $M=M_1M_2^{\intercal}$, where the entries of $M_1\in\R^{s\times k}$ and $M_2\in\R^{t\times k}$ are selected i.i.d. from the standard normal distribution. Here, we set $k=10$, $m=400n$ and take $s=t=1000,1500,2000,2500,3000,4000,5000,6000$ respectively to generate test instances.

For each instance, we solve \eqref{sec6-eq7} using the solvers {\tt MOSEK}, {\tt SDPLR}, {\tt SDPNAL+}, and {\tt ManiSDP}, respectively.
The results are presented in Table \ref{sec6:table:mc}, from which we make the following observations. (i) {\tt MOSEK} cannot solve any instance due to lack of enough memory. (ii) {\tt ManiSDP} is not only the most efficient but also the most accurate among the remaining three solvers. In particular, {\tt ManiSDP} is twice faster than {\tt SDPLR}, and is faster than {\tt SDPNAL+} by a order of magnitude.

\begin{table}[htbp]
	\caption{Results for the matrix completion problem.}\label{sec6:table:mc}
	\renewcommand\arraystretch{1.2}
	\centering
        \resizebox{\linewidth}{!}{
	\begin{tabular}{c|c|c|c|c|c|c|c|c|c|c}
		\multirow{2}{*}{$n$}&\multirow{2}{*}{trial}&\multirow{2}{*}{$m$}&\multicolumn{2}{c|}{{\tt MOSEK 10.0}}&\multicolumn{2}{c|}{{\tt SDPLR 1.03}}&\multicolumn{2}{c|}{{\tt SDPNAL+}}&\multicolumn{2}{c}{{\tt ManiSDP}}\\
		\cline{4-11}	&&&$\eta_{\max}$&time&$\eta_{\max}$&time&$\eta_{\max}$&time&$\eta_{\max}$&time\\
		\hline
		\multirow{3}{*}{2000}&\#1&550,536&-&-&1.7e-06&15.1&1.1e-08&69.9&5.3e-09&{\bf 7.87}\\
            &\#2&550,565&-&-&1.3e-06&14.7&4.5e-09&131&3.2e-10&{\bf 7.92}\\
            &\#3&550,590&-&-&8.6e-07&15.3&4.1e-09&143&3.7e-10&{\bf 8.32}\\
            \hline
		\multirow{3}{*}{3000}&\#1&930,328&-&-&1.8e-06&51.4&3.1e-08&238&9.1e-11&{\bf 21.6}\\
            &\#2&929,882&-&-&6.7e-07&49.4&3.2e-08&217&1.0e-10&{\bf 22.5}\\
            &\#3&930,080&-&-&3.6e-06&45.4&3.1e-08&216&4.1e-10&{\bf 22.2}\\
            \hline
		\multirow{3}{*}{4000}&\#1&1,318,563&-&-&1.0e-06&88.7&4.8e-08&532&3.2e-10&{\bf 48.3}\\
            &\#2&1,318,488&-&-&1.6e-06&99.1&1.9e-09&548&2.9e-10&{\bf 47.2}\\
            &\#3&1,318,885&-&-&2.2e-06&96.8&4.7e-08&519&2.7e-10&{\bf 49.5}\\
            \hline
            \multirow{3}{*}{5000}&\#1&1,711,980&-&-&1.2e-06&157&1.4e-09&1143&1.5e-10&{\bf 86.3}\\
            &\#2&1,711,445&-&-&1.0e-06&166&1.6e-09&1084&2.4e-10&{\bf 86.8}\\
            &\#3&1,711,660&-&-&1.1e-06&177&1.3e-09&1111&1.7e-10&{\bf 90.4}\\
            \hline
            \multirow{3}{*}{6000}&\#1&2,107,303&-&-&2.2e-07&272&2.1e-09&1883&4.7e-09&{\bf 139}\\
            &\#2&2,106,628&-&-&1.1e-06&260&2.2e-09&2001&1.5e-10&{\bf 145}\\
            &\#3&2,106,039&-&-&1.3e-06&271&2.5e-09&1979&2.0e-10&{\bf 145}\\
            \hline
            \multirow{3}{*}{8000}&\#1&2,900,179&-&-&2.1e-06&498&1.5e-08&3417&5.2e-11&{\bf 210}\\
            &\#2&2,900,585&-&-&3.4e-06&449&3.0e-08&4374&2.2e-10&{\bf 213}\\
            &\#3&2,900,182&-&-&3.2e-06&490&3.5e-08&4307&2.0e-10&{\bf 209}\\
            \hline
            \multirow{3}{*}{10000}&\#1&3,695,929&-&-&1.1e-06&800&1.4e-09&8370&1.9e-10&{\bf 369}\\
            &\#2&3,696,602&-&-&2.1e-06&789&8.6e-09&8849&2.1e-10&{\bf 363}\\
            &\#3&3,696,604&-&-&1.1e-06&798&7.2e-09&8502&2.5e-10&{\bf 354}\\
            \hline
            \multirow{3}{*}{12000}&\#1&4,493,420&-&-&7.8e-07&1310&$*$&$*$&8.3e-11&{\bf 568}\\
            &\#2&4,494,532&-&-&7.1e-07&1291&$*$&$*$&1.8e-10&{\bf 578}\\
            &\#3&4,493,391&-&-&3.5e-07&1330&$*$&$*$&4.9e-10&{\bf 590}\\
	\end{tabular}}
\end{table}

\subsection{Binary quadratic programs}
Let us consider the binary quadratic program given by
\begin{equation}\label{sec6:bqp}
\begin{cases}
\inf\limits_{\x\in\R^q} &\x^\intercal Q\x + \mathbf{c}^\intercal\x\\
\,\,\,\rm{s.t.}&x_i^2=1,\quad i=1,\ldots,q,\\
\end{cases}\tag{BQP}
\end{equation}
where $Q\in\S_q$ and $\mathbf{c}\in\R^{q}$. \eqref{sec6:bqp} includes the Max-Cut problem \eqref{maxcut} as well as many other combinatorial optimization problems as special cases. On the other hand, \eqref{sec6:bqp} belongs to the more general class of polynomial optimization problems whose objective functions and constraints are given by polynomials. For a polynomial optimization problem, there is a systematic way to construct a hierarchy of increasingly tighter SDP relaxations, known as the moment-SOS hierarchy or the Lasserre hierarchy\footnote{Under mild conditions, the optima of the hierarchy converge to the optimum of the polynomial optimization problem.} \cite{Las01}. The moment SDP relaxation arising from the Lasserre hierarchy typically admits low-rank optimal solutions. Interestingly, for the binary quadratic program \eqref{sec6:bqp}, the second-order moment relaxation is empirically tight on randomly generated instances as observed in \cite{lasserre2001explicit,yang2022inexact}. In the following we outline the ingredients of the second-order moment relaxation for \eqref{sec6:bqp}.
Let
\begin{equation*}
v(\x)\coloneqq[1,x_1,\ldots,x_q,x_1x_2,x_1x_3,\ldots,x_{q-1}x_q]^{\intercal}
\end{equation*}
be the vector of monomials in $\x$ up to degree two (excluding $x_i^2, i=1,\ldots,q$) and $M\coloneqq v(\x)v(\x)^{\intercal}$
be the corresponding \emph{moment matrix}. Then the objective function of \eqref{sec6:bqp} can be linearly expressed in terms of the entries of $M$. There are linear relationships among the entries of $M$ consisting of $M_{ij}=M_{kr}$ whenever $M_{ij}-M_{kr}$ is reduced to $0$ in the Gr\"obner basis $\{x_i^2-1\}_{i=1}^q$.
Let $\cA(X)=b$ collect all independent linear constraints obtained from these linear relationships when relaxing $M$ to an unknown PSD matrix $X$. Moreover, because of the constraints $x_i^2=1,i=1,\ldots,q$, the diagonal entries of $M$ are all ones and so we let $\cB(X)=d$ impose the unit-diagonal constraint on $X$. Consequently, we obtain the second-order moment relaxation for \eqref{sec6:bqp}, which fits in \eqref{sdp1} with $\cM$ matching \eqref{maniX2}.

For each $q\in\{10,20,30,40,50,60\}$, we generate three random instances of \eqref{sec6:bqp} by taking $Q\in\S_q$ with $Q_{ij}\sim\cN(0,1)$ and $\mathbf{c}\in\R^{q}$ with $c_{i}\sim\cN(0,1)$.
For each instance, we solve the second-order moment relaxation using the solvers {\tt MOSEK}, {\tt SDPLR}, {\tt SDPNAL+}, {\tt STRIDE} and {\tt ManiSDP}, respectively. The sizes of SDPs are recorded in Table \ref{sec6:table0:bpq} and
\begin{table}[htbp]
\caption{The sizes of SDPs for binary quadratic programs.}\label{sec6:table0:bpq}
\renewcommand\arraystretch{1.2}
\centering
\begin{tabular}{c|c|c|c|c|c|c}
$q$&10&20&30&40&50&60\\
\hline
$n$&56&211&466&821&1276&1831\\
\hline
$m$&1,256&16,361&77,316&236,121&564,776&1,155,281
\end{tabular}
\end{table}
the computational results are presented in Table \ref{sec6:table:bpq}. The following conclusions can be drawn from Table \ref{sec6:table:bpq}. (i) {\tt MOSEK} can solve small-scale instances ($q\le20$) to high accuracy, but the running time significantly grows as $q$ increases ($<1$s for $q=10$ while $\sim50$s for $q=20$). When $q\ge30$, {\tt MOSEK} runs out of space due to large memory consumption. (ii) {\tt SDPLR} can solve small/medium-scale instances ($q\le30$) to medium accuracy, but the running time significantly grows as $q$ increases. When $q\ge40$, {\tt SDPLR} needs over $10000$s to output the final result. (iii) {\tt SDPNAL+} can solve large-scale instances to medium/high accuracy, but the running time is pretty significant for large cases. (iv) Both {\tt STRIDE} and {\tt ManiSDP} can solve large-scale instances to high accuracy while {\tt ManiSDP} is faster than {\tt STRIDE} by a factor of $2\sim35$.

\begin{table}[htbp]
	\caption{Results for binary quadratic programs.}\label{sec6:table:bpq}
	\renewcommand\arraystretch{1.2}
	\centering
        \resizebox{\linewidth}{!}{
	\begin{tabular}{c|c|c|c|c|c|c|c|c|c|c|c}
		\multirow{2}{*}{$q$}&\multirow{2}{*}{trial}&\multicolumn{2}{c|}{{\tt MOSEK 10.0}}&\multicolumn{2}{c|}{{\tt SDPLR 1.03}}&\multicolumn{2}{c|}{{\tt SDPNAL+}}&\multicolumn{2}{c|}{{\tt STRIDE}}&\multicolumn{2}{c}{{\tt ManiSDP}}\\
		\cline{3-12}
		&&$\eta_{\max}$&time&$\eta_{\max}$&time&$\eta_{\max}$&time&$\eta_{\max}$&time&$\eta_{\max}$&time\\
		\hline
		\multirow{3}{*}{10}&\#1&2.6e-12&0.71&1.5e-06&0.52&1.9e-09&0.65&4.7e-13&0.79&3.9e-15&{\bf 0.14}\\
             &\#2&5.7e-14&0.84&6.0e-07&0.53&3.9e-09&1.37&3.4e-10&0.65&3.3e-15&{\bf 0.18}\\
             &\#3&8.0e-11&0.67&1.0e-06&1.27&1.5e-08&1.91&6.7e-13&0.68&4.2e-15&{\bf 0.29}\\
		\hline
		\multirow{3}{*}{20}&\#1&9.8e-10&49.0&3.9e-07&30.8&3.0e-09&28.8&7.4e-13&6.12&1.5e-14&{\bf 0.53}\\
             &\#2&9.0e-10&50.3&2.3e-08&113&1.7e-08&29.0&6.4e-13&6.98&1.3e-14&{\bf 0.61}\\
             &\#3&2.1e-12&47.9&6.6e-08&119&2.3e-07&12.5&2.9e-09&5.86&1.2e-14&{\bf 0.72}\\
		\hline
		\multirow{3}{*}{30}&\#1&-&-&2.1e-06&8384&1.7e-04&187&1.2e-12&65.4&2.8e-14&{\bf 3.93}\\
             &\#2&-&-&2.7e-07&2796&6.4e-09&95.5&3.1e-09&36.2&3.2e-14&{\bf 2.96}\\
             &\#3&-&-&1.6e-06&5698&7.8e-08&156&1.0e-12&60.3&2.8e-14&{\bf 4.01}\\
		\hline
		\multirow{3}{*}{40}&\#1&-&-&$*$&$*$&2.1e-08&813&4.4e-13&249&4.6e-14&{\bf 10.5}\\
             &\#2&-&-&$*$&$*$&1.3e-06&1514&8.5e-09&294&4.7e-14&{\bf 8.50}\\
             &\#3&-&-&$*$&$*$&1.3e-07&857&1.6e-12&321&4.4e-14&{\bf 10.0}\\
		\hline
		\multirow{3}{*}{50}&\#1&-&-&$*$&$*$&1.6e-07&3058&7.8e-09&826&6.4e-14&{\bf 31.1}\\
             &\#2&-&-&$*$&$*$&4.8e-08&6347&1.8e-12&1020&8.9e-14&{\bf 42.8}\\
             &\#3&-&-&$*$&$*$&7.0e-09&4800&8.2e-13&702&7.6e-14&{\bf 61.4}\\
		\hline
		\multirow{3}{*}{60}&\#1&-&-&$*$&$*$&$*$&$*$&1.3e-12&2118&9.4e-14&{\bf 94.3}\\
             &\#2&-&-&$*$&$*$&$*$&$*$&$*$&$*$&9.5e-14&{\bf 566}\\
             &\#3&-&-&$*$&$*$&$*$&$*$&3.3e-12&2704&8.7e-14&{\bf 150}\\
	\end{tabular}}
\end{table}

In Figures \ref{fig:1} and \ref{fig:2}, the factorization size and the maximal KKT residue per iteration in solving a random instance of \eqref{sec6:bqp} are shown for $q=10,20,30,40,50,60$, respectively.

\begin{figure}[htbp]	
\centering
\begin{tikzpicture}
\footnotesize
\scalefont{0.8} 
\begin{axis}[
sharp plot, 
xmode=normal,
width=12cm, height=6cm,  
xlabel= Iteration,
ylabel = Factorization size,
xlabel near ticks,
ylabel near ticks,
legend style={at={(0.15,0.5)},anchor=south},
]

\addplot[semithick,mark=*,mark options={scale=0.6}, color=lightgreen] coordinates { 
     (1,     2)
     (2,    10)
     (3,    18)
     (4,    14)
     (5,    13)
     (6,    10)
     (7,    12)
     (8,    12)
     (9,    10)
    (10,    10)
    (11,     5)
    (12,     2)
  };
\addlegendentry{$q=10$}

\addplot[semithick,mark=x,mark options={scale=0.6}, color=bordeaux] coordinates { 
     (1,     2)
     (2,    10)
     (3,    18)
     (4,    26)
     (5,    25)
     (6,    23)
     (7,    21)
     (8,    17)
     (9,    17)
    (10,    15)
    (11,    13)
    (12,     6)
    (13,     2)
};
\addlegendentry{$q=20$}

 \addplot[semithick,mark=+,mark options={scale=0.6}, color=color1] coordinates { 
(1,2)
(2,10)
(3,18)
(4,26)
(5,34)
(6,41)
(7,45)
(8,50)
(9,52)
(10,54)
(11,57)
(12,60)
(13,60)
(14,39)
(15,28)
(16,23)
(17,17)
(18,4)
(19,4)
(20,3)
(21,3)
(22,3)
(23,3)
(24,3)
(25,3)
(26,3)
(27,2)
};
\addlegendentry{$q=30$}

 \addplot[semithick,mark=*,mark options={scale=0.6}, color=color2] coordinates { 
     (1,    2)
     (2,    10)
     (3,    18)
     (4,    26)
     (5,    34)
     (6,    42)
     (7,    50)
     (8,    57)
     (9,    64)
    (10,    70)
    (11,    76)
    (12,    83)
    (13,    85)
    (14,    93)
    (15,    94)
    (16,    95)
    (17,    97)
    (18,    97)
    (19,    91)
    (20,    93)
    (21,    93)
    (22,    93)
    (23,    93)
    (24,    73)
    (25,    32)
    (26,     7)
    (27,     2)
};
    \addlegendentry{$q=40$}
    
\addplot[semithick,mark=x,mark options={scale=0.6}, color=color3] coordinates {
     (1,     2)
     (2,    10)
     (3,    18)
     (4,    26)
     (5,    34)
     (6,    42)
     (7,    50)
     (8,    58)
     (9,    66)
    (10,    74)
    (11,    82)
    (12,    89)
    (13,    95)
    (14,    97)
    (15,    97)
    (16,    98)
    (17,   100)
    (18,   101)
    (19,    97)
    (20,    92)
    (21,    77)
    (22,    31)
    (23,    22)
    (24,    12)
    (25,     6)
    (26,     3)
    (27,     3)
    (28,     2)
};
    \addlegendentry{$q=50$}

\addplot[semithick,mark=+,mark options={scale=0.6}, color=darkblue] coordinates { 
     (1,     2)
     (2,    10)
     (3,    18)
     (4,    26)
     (5,    34)
     (6,    42)
     (7,    50)
     (8,    58)
     (9,    66)
    (10,    74)
    (11,    82)
    (12,    90)
    (13,    98)
    (14,   106)
    (15,   114)
    (16,   122)
    (17,   129)
    (18,   137)
    (19,   145)
    (20,   151)
    (21,   158)
    (22,   162)
    (23,   167)
    (24,   169)
    (25,   174)
    (26,   178)
    (27,   183)
    (28,   184)
    (29,   183)
    (30,   182)
    (31,   172)
    (32,    58)
    (33,    18)
    (34,     8)
    (35,     2)
};
    \addlegendentry{$q=60$}
    
\end{axis}
\end{tikzpicture}
\caption{The factorization size per iteration in solving \eqref{sec6:bqp}.}
\label{fig:1}
\end{figure}  

\begin{figure}[htbp]	
\centering
\begin{tikzpicture}
\footnotesize
\scalefont{0.8} 
\begin{axis}[
sharp plot, 
xmode=normal,
width=12cm, height=6cm,  
xlabel= Iteration,
ylabel = $\log_{10}\eta_{\max}$,
xlabel near ticks,
ylabel near ticks,
legend style={at={(0.15,0.05)},anchor=south},
]

\addplot[semithick,mark=*,mark options={scale=0.6}, color=lightgreen] coordinates { 
    (1,1.3738)
    (2,1.3560)
    (3,1.1419)
    (4,0.9023)
    (5,0.6219)
    (6,0.3069)
    (7,-0.0956)
   (8,-0.6858)
   (9,-1.1237)
   (10,-2.9394)
   (11,-5.3559)
  (12,-14.4124)
  };
\addlegendentry{$q=10$}

\addplot[semithick,mark=x,mark options={scale=0.6}, color=bordeaux] coordinates { 
     (1,1.9135)
    (2,1.8965)
    (3,1.4612)
    (4,1.1012)
    (5,0.6259)
    (6,0.1540)
   (7,-0.4475)
   (8,-1.1710)
   (9,-1.9341)
   (10,-2.5508)
   (11,-3.3107)
   (12,-4.6742)
  (13,-13.8337)
};
\addlegendentry{$q=20$}

 \addplot[semithick,mark=+,mark options={scale=0.6}, color=color1] coordinates { 
    (1,2.2248)
    (2,2.1864)
    (3,1.7912)
    (4,1.3955)
    (5,1.0897)
    (6,0.8175)
    (7,0.5130)
    (8,0.2951)
    (9,0.5385)
    (10,0.4986)
    (11,0.0247)
   (12,-0.3354)
   (13,-0.4387)
   (14,-1.3433)
   (15,-1.7769)
   (16,-2.0898)
   (17,-1.8834)
   (18,-2.2084)
   (19,-2.1622)
   (20,-2.2713)
   (21,-2.6100)
   (22,-3.0742)
   (23,-3.4180)
   (24,-3.8291)
   (25,-4.3183)
   (26,-4.8165)
  (27,-13.5499)
};
\addlegendentry{$q=30$}

 \addplot[semithick,mark=*,mark options={scale=0.6}, color=color2] coordinates { 
     (1,2.4368)
    (2,2.4154)
    (3,2.0063)
    (4,1.7715)
    (5,1.4204)
    (6,1.1302)
    (7,0.9322)
    (8,1.1123)
    (9,0.9243)
    (10,1.0986)
    (11,0.8902)
    (12,1.1278)
    (13,0.7764)
    (14,0.9966)
    (15,0.7844)
    (16,1.0016)
    (17,0.9982)
    (18,0.6848)
    (19,0.3356)
    (20,0.0574)
    (21,0.0666)
   (22,-0.2671)
   (23,-0.4481)
   (24,-1.5928)
   (25,-2.7309)
   (26,-3.6376)
  (27,-13.3339)
};
    \addlegendentry{$q=40$}

\addplot[semithick,mark=x,mark options={scale=0.6}, color=color3] coordinates {
    (1,2.6183)
    (2,2.5571)
    (3,2.1762)
    (4,1.9270)
    (5,1.7303)
    (6,1.3707)
    (7,1.5407)
    (8,1.1199)
    (9,0.9821)
    (10,1.1742)
    (11,0.9429)
    (12,1.1421)
    (13,0.6878)
    (14,0.7042)
    (15,0.3370)
    (16,0.0363)
   (17,-0.0814)
   (18,-0.0843)
   (19,-0.4093)
   (20,-0.3858)
   (21,-0.0937)
   (22,-0.6127)
   (23,-0.6422)
   (24,-1.3879)
   (25,-2.2145)
   (26,-3.4785)
   (27,-4.8754)
  (28,-13.1952)
};
    \addlegendentry{$q=50$}

    \addplot[semithick,mark=+,mark options={scale=0.6}, color=darkblue] coordinates { 
     (1,2.7690)
    (2,2.7522)
    (3,2.4095)
    (4,2.2272)
    (5,2.0676)
    (6,1.8531)
    (7,1.5438)
    (8,1.6769)
    (9,1.4228)
    (10,1.2507)
    (11,1.6238)
    (12,1.3222)
    (13,1.5980)
    (14,1.7075)
    (15,1.3848)
    (16,1.1574)
    (17,1.3154)
    (18,1.1214)
    (19,1.3381)
    (20,0.9677)
    (21,1.1129)
    (22,0.7723)
    (23,0.5708)
    (24,0.7849)
    (25,0.3833)
    (26,0.3668)
    (27,0.4601)
    (28,0.0722)
   (29,-0.2549)
   (30,-0.3040)
   (31,-0.4775)
   (32,-1.6029)
   (33,-2.1874)
   (34,-3.0898)
  (35,-13.0250)
};
    \addlegendentry{$q=60$}
    
\end{axis}
\end{tikzpicture}
\caption{The maximal KKT residue per iteration in solving \eqref{sec6:bqp}.}
\label{fig:2}
\end{figure}
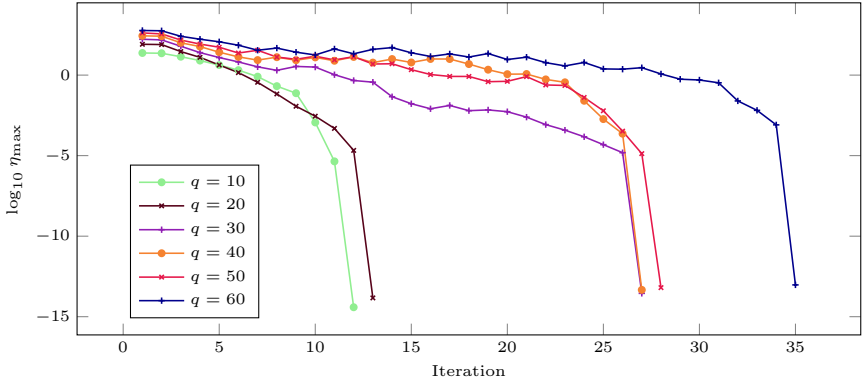  

To test the limit of {\tt ManiSDP}, we run {\tt ManiSDP} to solve the second-order moment relaxation of \eqref{sec6:bqp} with larger $q$. As shown in Table \ref{sec6:table1:bpq}, {\tt ManiSDP} can scale up to $q=120$ for which the SDP has matrix dimension $n=7261$ and contains $m=17,869,161$ affine constraints, far beyond the reach of other SDP solvers!

\begin{table}[htbp]
\caption{Results for large binary quadratic programs via {\tt ManiSDP}.}\label{sec6:table1:bpq}
\renewcommand\arraystretch{1.2}
\centering
\begin{tabular}{c|c|c|c|c|c|c}
$q$&70&80&90&100&110&120\\
\hline
$n$&2486&3241&4096&5051&6106&7261\\
\hline
$m$&2,119,636&3,589,841&5,717,896&8,675,801&12,655,556&17,869,161\\
\hline
$\eta_{\max}$&1.4e-13&1.7e-13&2.2e-13&2.5e-13&3.0e-13&3.5e-13\\
\hline
time&1050&1132&3279&5249&7053&30801
\end{tabular}
\end{table}

\subsection{Minimizing quartic polynomials on the unit sphere}
Let us consider the problem of minimizing a quartic polynomial on the unit sphere:
\begin{equation}\label{sec6:qs}
\begin{cases}
\inf\limits_{\x\in\R^q} &\mathbf{c}^\intercal\cdot[\x]_4\\
\,\,\,\rm{s.t.}&\sum_{i=1}^qx_i^2=1,\\
\end{cases}\tag{QS}
\end{equation}
where $[\x]_4$ is the vector of monomials in $\x$ up to degree four and $\mathbf{c}\in\R^{|[\x]_4|}$. As for \eqref{sec6:bqp}, the second-order moment relaxation is empirically tight on randomly generated instances of \eqref{sec6:qs} \cite{yang2022inexact}.
Let
\begin{equation*}
v(\x)\coloneqq[1,x_1,\ldots,x_q,x_1^2,x_1x_2,x_1x_3,\ldots,x_{q-1}x_q,x_q^2]^{\intercal}
\end{equation*}
be the vector of monomials in $\x$ up to degree two and $M\coloneqq v(\x)v(\x)^{\intercal}$
be the corresponding moment matrix. Then the objective function of \eqref{sec6:qs} can be linearly expressed in terms of the entries of $M$. There are linear relationships among the entries of $M$ consisting of all $M_{ij}=M_{kr}$. In addition, for each monomial $w\in v(\x)$, the constraint $\sum_{i=1}^qx_i^2=1$ gives $w(\sum_{i=1}^qx_i^2-1)=0$ which can be also linearly expressed in terms of the entries of $M$.
Let $\cA(X)=b$ collect all independent linear constraints obtained from these linear relationships when relaxing $M$ to an unknown PSD matrix $X$. We therefore obtain the second-order moment relaxation for \eqref{sec6:qs}, which fits in \eqref{sdp1} with $\cM$ being the Euclidean manifold ($l=0$).

For each $q\in\{10,20,30,40,50,60\}$, we generate three random instances of \eqref{sec6:qs} by taking $\mathbf{c}\in\R^{|[\x]_4|}$ with $c_{i}\sim\cN(0,1)$. For each instance, we solve the second-order moment relaxation using the solvers {\tt MOSEK}, {\tt SDPLR}, {\tt SDPNAL+}, {\tt STRIDE} and {\tt ManiSDP}, respectively.
The results are presented in Table \ref{sec6:table:qs}, from which we can draw the following conclusions. (i) {\tt MOSEK} can solve small-scale instances ($q\le20$) to high accuracy, but the running time significantly grows as $q$ increases ($<1$s for $q=10$ while $\sim50$s for $q=20$). When $q\ge30$, {\tt MOSEK} runs out of space due to large memory consumption. (ii) {\tt SDPLR} can solve all instances to medium accuracy, but the running time significantly grows as $q$ increases. (iii) {\tt ManiSDP} is the most efficient solver among the remaining three solvers. (iv) {\tt SDPNAL+} attains only medium accuracy for large-scale instances whereas {\tt STRIDE} and {\tt ManiSDP} can always attain high accuracy.

\begin{table}[htbp]
	\caption{Results for minimizing quartic polynomials on the unit sphere.}\label{sec6:table:qs}
	\renewcommand\arraystretch{1.2}
	\centering
        \resizebox{\linewidth}{!}{
	\begin{tabular}{c|c|c|c|c|c|c|c|c|c|c|c}
		\multirow{2}{*}{$q$}&\multirow{2}{*}{trial}&\multicolumn{2}{c|}{{\tt MOSEK 10.0}}&\multicolumn{2}{c|}{{\tt SDPLR 1.03}}&\multicolumn{2}{c|}{{\tt SDPNAL+}}&\multicolumn{2}{c|}{{\tt STRIDE}}&\multicolumn{2}{c}{{\tt ManiSDP}}\\
		\cline{3-12}
		&&$\eta_{\max}$&time&$\eta_{\max}$&time&$\eta_{\max}$&time&$\eta_{\max}$&time&$\eta_{\max}$&time\\
		\hline
		\multirow{3}{*}{10}&\#1&6.6e-11&0.79&2.0e-07&{\bf 0.07}&2.5e-09&0.45&3.9e-12&0.35&4.5e-09&0.18\\
             &\#2&8.8e-10&0.80&9.2e-07&0.56&1.3e-09&0.54&2.6e-12&0.52&7.0e-10&{\bf 0.30}\\
             &\#3&5.9e-10&0.79&9.0e-07&{\bf 0.04}&2.1e-09&0.54&2.3e-11&0.38&1.7e-10&0.18\\
             \hline
		\multirow{3}{*}{20}&\#1&7.4e-09&42.5&3.5e-06&1.35&1.2e-09&3.27&2.5e-12&2.74&3.7e-10&{\bf 0.95}\\
             &\#2&4.0e-10&49.3&3.7e-07&2.34&5.6e-09&3.36&4.7e-11&2.91&5.4e-10&{\bf 1.13}\\
             &\#3&1.0e-08&42.9&5.2e-08&1.10&9.9e-09&3.20&8.7e-13&3.01&1.1e-09&{\bf 0.82}\\
             \hline
		\multirow{3}{*}{30}&\#1&-&-&1.5e-06&20.8&1.2e-09&20.9&3.1e-11&18.6&1.7e-10&{\bf 6.46}\\
             &\#2&-&-&7.4e-07&38.4&1.5e-10&21.4&3.2e-13&19.4&8.4e-09&{\bf 5.63}\\
             &\#3&-&-&1.8e-07&19.6&1.1e-09&19.0&1.1e-12&22.3&3.7e-10&{\bf 5.92}\\
             \hline
		\multirow{3}{*}{40}&\#1&-&-&3.5e-07&689&1.0e-07&45.0&2.8e-13&42.8&4.3e-09&{\bf 28.7}\\
             &\#2&-&-&6.2e-07&272&1.4e-07&28.8&1.0e-12&39.1&4.2e-09&{\bf 19.4}\\
             &\#3&-&-&1.4e-07&261&3.9e-06&24.9&4.2e-11&39.3&8.9e-09&{\bf 20.1}\\
             \hline
		\multirow{3}{*}{50}&\#1&-&-&8.0e-07&1588&5.2e-07&68.9&2.3e-12&115&4.8e-09&{\bf 61.4}\\
             &\#2&-&-&8.2e-08&1183&2.9e-06&69.0&8.4e-11&105&2.6e-09&{\bf 49.9}\\
             &\#3&-&-&3.9e-07&2350&1.1e-06&71.4&5.1e-11&124&4.7e-09&{\bf 57.1}\\
		\hline
		\multirow{3}{*}{60}&\#1&-&-&2.4e-07&4167&5.7e-07&177&2.6e-12&194&3.9e-09&{\bf 109}\\
             &\#2&-&-&1.0e-08&7229&3.4e-07&237&3.6e-13&288&6.5e-10&{\bf 116}\\
             &\#3&-&-&3.6e-08&7752&4.7e-07&195&4.4e-13&209&2.1e-09&{\bf 173}\\
	\end{tabular}}
\end{table}

In Figures \ref{fig:5} and \ref{fig:6}, the factorization size and the maximal KKT residue per iteration in solving a random instance of \eqref{sec6:qs} are shown for $q=10,20,30,40,50,60$, respectively.

\begin{figure}[htbp]	
\centering
\begin{tikzpicture}
\footnotesize
\scalefont{0.8} 
\begin{axis}[
sharp plot, 
xmode=normal,
width=12cm, height=6cm,  
xlabel= Iteration,
ylabel = Factorization size,
xlabel near ticks,
ylabel near ticks,
legend style={at={(0.15,0.5)},anchor=south},
]

\addplot[semithick,mark=*,mark options={scale=0.6}, color=lightgreen] 
 file {plotdata/qs_fz_10.txt};
\addlegendentry{$q=10$}
 
\addplot[semithick,mark=x,mark options={scale=0.6}, color=bordeaux]
 file {plotdata/qs_fz_20.txt};
\addlegendentry{$q=20$}

 \addplot[semithick,mark=+,mark options={scale=0.6}, color=color1] 
 file {plotdata/qs_fz_30.txt};
\addlegendentry{$q=30$}

 \addplot[semithick,mark=*,mark options={scale=0.6}, color=color2]
 file {plotdata/qs_fz_40.txt};
    \addlegendentry{$q=40$}
    
\addplot[semithick,mark=x,mark options={scale=0.6}, color=color3] 
 file {plotdata/qs_fz_50.txt};
    \addlegendentry{$q=50$}

\addplot[semithick,mark=+,mark options={scale=0.6}, color=darkblue] 
 file {plotdata/qs_fz_60.txt};
    \addlegendentry{$q=60$}
    
\end{axis}
\end{tikzpicture}
\caption{The factorization size per iteration in solving \eqref{sec6:qs}.}
\label{fig:5}
\end{figure}  

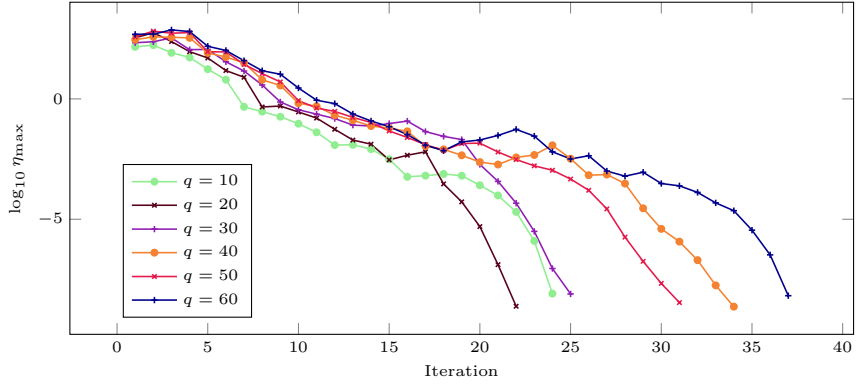
\begin{figure}[htbp]	
\centering
\begin{tikzpicture}
\footnotesize
\scalefont{0.8} 
\begin{axis}[
sharp plot, 
xmode=normal,
width=12cm, height=6cm,  
xlabel= Iteration,
ylabel = $\log_{10}\eta_{\max}$,
xlabel near ticks,
ylabel near ticks,
legend style={at={(0.15,0.05)},anchor=south},
]

\addplot[semithick,mark=*,mark options={scale=0.6}, color=lightgreen]
file {plotdata/qs_residue_10.txt};
\addlegendentry{$q=10$}

\addplot[semithick,mark=x,mark options={scale=0.6}, color=bordeaux]
file {plotdata/qs_residue_20.txt};
\addlegendentry{$q=20$}

 \addplot[semithick,mark=+,mark options={scale=0.6}, color=color1]
 file {plotdata/qs_residue_30.txt};
\addlegendentry{$q=30$}

 \addplot[semithick,mark=*,mark options={scale=0.6}, color=color2] 
 file {plotdata/qs_residue_40.txt};
    \addlegendentry{$q=40$}

\addplot[semithick,mark=x,mark options={scale=0.6}, color=color3] 
file {plotdata/qs_residue_50.txt};
    \addlegendentry{$q=50$}

    \addplot[semithick,mark=+,mark options={scale=0.6}, color=darkblue] 
    file {plotdata/qs_residue_60.txt};
    \addlegendentry{$q=60$}
    
\end{axis}
\end{tikzpicture}
\caption{The maximal KKT residue per iteration in solving \eqref{sec6:qs}.}
\label{fig:6}
\end{figure}  

\subsection{The robust rotation search problem}
The robust rotation search problem (also known as the Wahba problem with outliers) is to search for the best 3D rotation to align two sets of 3D points while explicitly tolerating outliers, which can be formulated as the nonlinear optimization problem:
\begin{equation}\label{eq:tlswahba}
  \min_{\|q\|=1} \sum_{i=1}^N \min \left\{\frac{\|\tilde{z}_i - q \circ \tilde{w}_i \circ q^{-1}\|^2 }{\beta_i^2}, 1 \right\},
\end{equation}
where $q$ is the unit quaternion parametrization of a 3D rotation, $(z_i\in\R^3,w_i\in\R^3)_{i=1}^N$ are given $N$ pairs of 3D points, $\tilde{z}\coloneqq[z^{\intercal},0]^{\intercal}\in\R^4$, $\tilde{w}\coloneqq[w^{\intercal},0]^{\intercal}\in\R^4$, $q^{-1}\coloneqq[-q_1,-q_2,-q_3,q_4]^{\intercal}$ is the inverse quaternion, ``$\circ$'' denotes the quaternion product, $\beta_i>0$ is a given threshold that determines the maximum inlier residual. Problem \eqref{eq:tlswahba} is a fundamental problem in aerospace, robotics and computer vision \cite{yang2019quaternion,yang2020teaser}. By introducing $N$ binary variables $\{\theta_i\}_{i=1}^N$, Problem \eqref{eq:tlswahba} can be equivalently reformulated as a polynomial optimization problem:
\begin{equation}\label{eq:wahba}
\min_{\substack{\|q\|=1,\\ \theta_i\in\{-1,1\},i=1,\dots,N}}\sum_{i=1}^N \frac{1+\theta_i}{2}\frac{\|\tilde{z}_i-q\circ\tilde{w}_i\circ q^{-1}\|^2}{\beta_i^2}+\frac{1-\theta_i}{2}. \tag{RRS}
\end{equation}
Each $\theta_i$ is used to decide whether the $i$-th pair of 3D points $(z_i,w_i)$ is an inlier or an outlier.

Yang and Carlone \cite{yang2019quaternion} proposed an SDP relaxation for \eqref{eq:wahba} that was empirically shown to be tight. Let $\x = [q^{\intercal},\theta_1,\dots,\theta_N]^{\intercal}\in \R^{N+4}$ be the decision variables of \eqref{eq:wahba}, and let 
\begin{equation}\label{eq:wahbaclone}
   v(\x) = [q^\intercal,\theta_1 q^\intercal, \dots, \theta_N q^\intercal]^\intercal \in \R^{4N+4} 
\end{equation}
be the sparse set of monomials in $\x$ of degree up to two. We build $M = v(\x)v(\x)^\intercal$ as the sparse moment matrix. Then the objective function of \eqref{eq:wahba} can be linearly expressed in terms of the entries of $M$. There are linear relationships among the entries of $M$: (1) the diagonal $4\times 4$ blocks of $M$ are all identical ($\theta_i^2 q q^\intercal = qq^\intercal$); (2) the off-diagonal $4\times 4$ blocks are symmetric ($\theta_i \theta_j q q^\intercal\in\S_4$). Let $\cA(X)=b$ collect all independent linear constraints obtained from these linear relationships when relaxing $M$ to an unknown PSD matrix $X$. In addition, because of the unit quaternion constraint, $M$ satisfies $\Tr(M) = N+1$ and so we let $\cB(X)=d$ impose the trace constraint on $X$. Consequently, this leads to an SDP relaxation of size
\begin{equation}
n = 4N+4, \quad m = 3N^2+13N+1,
\end{equation}
which fits in \eqref{sdp1} with $\cM$ matching \eqref{maniX1} after scaling $X$ by $\frac{1}{N+1}$.

For each $N\in\{50,100,150,200,300,500\}$, we generate three random instances of \eqref{eq:wahba}. For each instance, we solve the above SDP relaxation using the solvers {\tt MOSEK}, {\tt SDPLR}, {\tt SDPNAL+}, {\tt STRIDE} and {\tt ManiSDP}, respectively.
The results are presented in Table \ref{sec6:table:rs}. The following conclusions can be drawn from the table. (i) {\tt MOSEK} can solve small-scale instances ($N\le100$) to high accuracy, but the running time significantly grows as $N$ increases ($<20$s for $N=10$ while $\sim600$s for $N=100$). When $N\ge150$, {\tt MOSEK} runs out of space due to large memory consumption. (ii) Both {\tt SDPLR} and {\tt SDPNAL+} fail in solving these SDPs to even medium accuracy. (iii) Both {\tt STRIDE} and {\tt ManiSDP} can solve all instances to high accuracy while {\tt ManiSDP} is faster than {\tt STRIDE} by a factor of $3\sim10$.

\begin{table}[htbp]
	\caption{Results for the robust rotation search problem.}\label{sec6:table:rs}
	\renewcommand\arraystretch{1.2}
	\centering
        \resizebox{\linewidth}{!}{
	\begin{tabular}{c|c|c|c|c|c|c|c|c|c|c|c}
		\multirow{2}{*}{$N$}&\multirow{2}{*}{trial}&\multicolumn{2}{c|}{{\tt MOSEK 10.0}}&\multicolumn{2}{c|}{{\tt SDPLR 1.03}}&\multicolumn{2}{c|}{{\tt SDPNAL+}}&\multicolumn{2}{c|}{{\tt STRIDE}}&\multicolumn{2}{c}{{\tt ManiSDP}}\\
		\cline{3-12}	&&$\eta_{\max}$&time&$\eta_{\max}$&time&$\eta_{\max}$&time&$\eta_{\max}$&time&$\eta_{\max}$&time\\
		\hline
		\multirow{3}{*}{50}&\#1&4.7e-10&16.4&9.8e-03&12.5&1.1e-02&106&2.8e-09&18.3&6.6e-09&{\bf 3.02}\\
             &\#2&7.9e-10&19.3&3.1e-02&22.0&1.0e-02&96.3&7.3e-09&15.4&7.2e-10&{\bf 2.93}\\
             &\#3&1.1e-10&15.2&2.8e-03&19.4&1.1e-02&119&9.5e-09&15.4&5.5e-10&{\bf 3.59}\\
             \hline
		\multirow{3}{*}{100}&\#1&2.0e-11&622&3.6e-04&106&7.1e-02&642&3.1e-09&73.0&1.0e-09&{\bf 22.9}\\
             &\#2&1.8e-10&653&8.1e-04&78.1&3.8e-02&631&1.6e-09&67.4&3.6e-10&{\bf 20.3}\\
             &\#3&7.3e-12&590&2.9e-03&67.2&7.8e-02&597&4.8e-09&69.0&5.4e-10&{\bf 18.2}\\
             \hline
             \multirow{3}{*}{150}&\#1&-&-&2.0e-03&291&8.0e-02&1691&4.3e-11&249&1.6e-09&{\bf 33.5}\\
             &\#2&-&-&1.2e-03&233&6.4e-02&804&7.9e-09&171&4.7e-09&{\bf 36.5}\\
             &\#3&-&-&1.5e-01&416&1.2e-01&1491&9.9e-09&162&2.6e-09&{\bf 33.8}\\
             \hline
             \multirow{3}{*}{200}&\#1&-&-&3.1e-02&459&8.3e-02&2799&1.4e-09&254&6.3e-10&{\bf 65.3}\\
             &\#2&-&-&1.6e-01&761&6.5e-02&1653&2.9e-09&306&9.2e-10&{\bf 66.2}\\
             &\#3&-&-&3.8e-03&894&6.3e-02&2171&3.2e-11&220&8.5e-10&{\bf 67.9}\\
             \hline
             \multirow{3}{*}{300}&\#1&-&-&1.1e-03&1264&5.2e-02&3528&4.1e-10&1176&1.1e-09&{\bf 188}\\
             &\#2&-&-&7.3e-03&1787&4.9e-02&3421&8.0e-09&1458&3.6e-09&{\bf 190}\\
             &\#3&-&-&2.2e-03&1734&6.0e-02&4260&2.9e-09&868&1.2e-09&{\bf 203}\\
             \hline
             \multirow{3}{*}{500}&\#1&-&-&$*$&$*$&$*$&$*$&7.1e-09&5627&5.2e-10&{\bf 601}\\
             &\#2&-&-&5.4e-02&9574&$*$&$*$&4.5e-10&4884&1.9e-09&{\bf 801}\\
             &\#3&-&-&$*$&$*$&$*$&$*$&3.4e-09&7878&5.0e-09&{\bf 1055}\\
	\end{tabular}}
\end{table}

In Figures \ref{fig:3} and \ref{fig:4}, the factorization size and the maximal KKT residue per iteration in solving a random instance of \eqref{eq:wahba} are displayed for $N=50,100,150,200,300$, $500$, respectively.

\begin{figure}[htbp]	
\centering
\begin{tikzpicture}
\footnotesize
\scalefont{0.8} 
\begin{axis}[
sharp plot, 
xmode=normal,
width=12cm, height=6cm,  
xlabel= Iteration,
ylabel = Factorization size,
xlabel near ticks,
ylabel near ticks,
legend style={at={(0.15,0.5)},anchor=south},
]

\addplot[semithick,mark=*,mark options={scale=0.6}, color=lightgreen] 
 file {plotdata/rs_fz_50.txt};
\addlegendentry{$N=50$}
 
\addplot[semithick,mark=x,mark options={scale=0.6}, color=bordeaux]
 file {plotdata/rs_fz_100.txt};
\addlegendentry{$N=100$}

 \addplot[semithick,mark=+,mark options={scale=0.6}, color=color1] 
 file {plotdata/rs_fz_150.txt};
\addlegendentry{$N=150$}

 \addplot[semithick,mark=*,mark options={scale=0.6}, color=color2]
 file {plotdata/rs_fz_200.txt};
    \addlegendentry{$N=200$}
    
\addplot[semithick,mark=x,mark options={scale=0.6}, color=color3] 
 file {plotdata/rs_fz_300.txt};
    \addlegendentry{$N=300$}

\addplot[semithick,mark=+,mark options={scale=0.6}, color=darkblue] 
 file {plotdata/rs_fz_500.txt};
    \addlegendentry{$N=500$}
    
\end{axis}
\end{tikzpicture}
\caption{The factorization size per iteration in solving \eqref{eq:wahba}.}
\label{fig:3}
\end{figure}  

\begin{figure}[htbp]	
\centering
\begin{tikzpicture}
\footnotesize
\scalefont{0.8} 
\begin{axis}[
sharp plot, 
xmode=normal,
width=12cm, height=6cm,  
xlabel= Iteration,
ylabel = $\log_{10}\eta_{\max}$,
xlabel near ticks,
ylabel near ticks,
legend style={at={(0.15,0.05)},anchor=south},
]

\addplot[semithick,mark=*,mark options={scale=0.6}, color=lightgreen]
file {plotdata/rs_residue_50.txt};
\addlegendentry{$N=50$}

\addplot[semithick,mark=x,mark options={scale=0.6}, color=bordeaux]
file {plotdata/rs_residue_100.txt};
\addlegendentry{$N=100$}

 \addplot[semithick,mark=+,mark options={scale=0.6}, color=color1]
 file {plotdata/rs_residue_150.txt};
\addlegendentry{$N=150$}

 \addplot[semithick,mark=*,mark options={scale=0.6}, color=color2] 
 file {plotdata/rs_residue_200.txt};
    \addlegendentry{$N=200$}

\addplot[semithick,mark=x,mark options={scale=0.6}, color=color3] 
file {plotdata/rs_residue_300.txt};
    \addlegendentry{$N=300$}

    \addplot[semithick,mark=+,mark options={scale=0.6}, color=darkblue] 
    file {plotdata/rs_residue_500.txt};
    \addlegendentry{$N=500$}
    
\end{axis}
\end{tikzpicture}
\caption{The maximal KKT residue per iteration in solving \eqref{eq:wahba}.}
\label{fig:4}
\end{figure}  

\subsection{Nearest structured rank deficient matrices}\label{nsrd}
Let us consider the problem of finding the nearest structured rank deficient matrix:
\begin{equation}\label{eq:stlsraw}
\min_{u\in\R^{N}}\left\{\|u-\theta\|^2\middle\vert L_0 + \sum_{i=1}^Nu_iL_i\text{ is rank deficient}\right\}, 
\end{equation}
where $L_i\in\R^{s\times t} (s\le t),i=0,\dots,N$ and $\theta\in\R^{N}$ are given. Applications of Problem \eqref{eq:stlsraw} (also known as the structured total least squares problem) could be found in \cite{markovsky2008structured}. We can reformulate \eqref{eq:stlsraw} as the following polynomial optimization problem:
\begin{equation}\label{eq:stls}
\min_{z\in\R^s, u\in\R^{N}}\left\{\|u-\theta\|^2\middle\vert z^\intercal\left(L_0 + \sum_{i=1}^Nu_iL_i\right) = 0,\|z\|=1\right\}. \tag{NSRD}
\end{equation}
Note that the unit vector $z$ in \eqref{eq:stls} serves as a witness of rank deficiency. \eqref{eq:stls} is non-convex and Cifuentes proposed an SDP relaxation for \eqref{eq:stls} \cite{cifuentes2021convex} which is guaranteed to be tight under a low-noise assumption \cite{cifuentes2022local}. Let $\x=[z^{\intercal},u^{\intercal}]^{\intercal}\in\R^{s+N}$ be the vector of variables involved in \eqref{eq:stls}, and let
\begin{equation}\label{eq1:stls}
   v(\x) = [z^\intercal,u_1 z^\intercal, \dots, u_N z^\intercal]^\intercal \in \R^{s(N+1)}
\end{equation}
be the sparse set of monomials in $\x$ of degree up to two. We build $M = v(\x)v(\x)^\intercal$ as the sparse moment matrix. Then the objective function of \eqref{eq:stls} can be linearly expressed in terms of the entries of $M$. There are linear relationships among the entries of $M$: (1) all off-diagonal $s\times s$ blocks $u_iu_jzz^{\intercal}$ are symmetric; (2) each of the first $t$ equality constraint in \eqref{eq:stls}, say $g=0$, gives rise to $wg=0$ for each monomial $w\in v(\x)$; (3) the unit norm of $z$ implies that the trace of the leading $s\times s$ block of $M$ is equal to $1$. Let $\cA(X)=b$ collect all independent linear constraints obtained from these linear relationships when relaxing $M$ to an unknown PSD matrix $X$. Consequently, we obtain an SDP relaxation of size 
\begin{equation*}
    n=s(N+1),\quad m=1+st(N+1)+\frac{s(s-1)N(N+1)}{4},
\end{equation*}
which fits in \eqref{sdp1} with $\cM$ being the Euclidean manifold ($l=0$).

For each $s\in\{10,15,20,25,30,40\}$, we generate three random instances of \eqref{eq:stlsraw} with $s=t$ and $N=2s-1$. For each instance, we solve the above SDP relaxation using the solvers {\tt MOSEK}, {\tt SDPLR}, {\tt SDPNAL+}, {\tt STRIDE} and {\tt ManiSDP}, respectively.
The results are presented in Table \ref{sec6:table:nsrd} from which we can make the following conclusions. (i) {\tt MOSEK} can solve small-scale instances ($s\le15$) to high accuracy, but the running time significantly grows as $s$ increases ($\sim20$s for $s=10$ while $\sim1500$s for $s=15$). When $s\ge20$, {\tt MOSEK} runs out of space due to large memory consumption. (ii) {\tt SDPLR} can only solve small-scale instances to medium accuracy, and becomes unreliable when $s\ge20$ for returning numerical errors. (iii) {\tt SDPNAL+} is much slower than {\tt STRIDE} and {\tt ManiSDP}, and can only obtain low/medium accuracy solutions. (iv) Both {\tt STRIDE} and {\tt ManiSDP} can solve the instances to high accuracy (occasionally {\tt STRIDE} returns low/medium accuracy solutions) while {\tt ManiSDP} is faster than {\tt STRIDE} by a factor of $2\sim10$.

\begin{table}[htbp]
	\caption{Results for nearest structured rank deficient matrices.}\label{sec6:table:nsrd}
	\renewcommand\arraystretch{1.2}
	\centering
        \resizebox{\linewidth}{!}{
	\begin{tabular}{c|c|c|c|c|c|c|c|c|c|c|c}
		\multirow{2}{*}{$s$}&\multirow{2}{*}{trial}&\multicolumn{2}{c|}{{\tt MOSEK 10.0}}&\multicolumn{2}{c|}{{\tt SDPLR 1.03}}&\multicolumn{2}{c|}{{\tt SDPNAL+}}&\multicolumn{2}{c|}{{\tt STRIDE}}&\multicolumn{2}{c}{{\tt ManiSDP}}\\
		\cline{3-12}	&&$\eta_{\max}$&time&$\eta_{\max}$&time&$\eta_{\max}$&time&$\eta_{\max}$&time&$\eta_{\max}$&time\\
		\hline
		\multirow{3}{*}{10}&\#1&3.0e-11&22.9&8.4e-07&6.49&7.2e-08&64.1&3.5e-12&8.97&6.8e-10&{\bf 1.28}\\
            &\#2&4.2e-11&20.1&6.4e-05&3.04&1.8e-06&32.5&3.4e-12&4.74&4.4e-10&{\bf 1.29}\\
            &\#3&4.2e-09&15.3&6.1e-06&3.87&2.6e-05&15.5&1.3e-10&6.14&4.7e-09&{\bf 0.90}\\
            \hline
             \multirow{3}{*}{15}&\#1&4.9e-11&1623&1.5e-05&236&4.1e-06&233&4.4e-11&41.4&7.1e-09&{\bf 12.7}\\
            &\#2&3.5e-09&1436&5.0e-05&369&2.9e-03&256&1.5e-10&33.0&7.2e-09&{\bf 14.5}\\
            &\#3&4.6e-10&1558&1.1e-05&32.5&1.8e-06&151&6.2e-11&35.5&6.5e-10&{\bf 5.97}\\
            \hline
            \multirow{3}{*}{20}&\#1&-&-&$**$&$**$&3.8e-03&894&3.0e-10&174&9.7e-09&{\bf 55.9}\\
            &\#2&-&-&$**$&$**$&1.6e-02&1336&3.1e-11&125&7.9e-09&{\bf 37.5}\\
            &\#3&-&-&8.6e-06&1055&4.4e-03&1474&2.2e-10&149&7.5e-09&{\bf 40.1}\\
            \hline
            \multirow{3}{*}{25}&\#1&-&-&$**$&$**$&6.1e-03&8457&3.3e-06&4398&7.4e-09&{\bf 781}\\
            &\#2&-&-&$**$&$**$&4.4e-07&3907&5.8e-10&429&7.9e-09&{\bf 50.8}\\
            &\#3&-&-&$**$&$**$&1.3e-01&5153&2.6e-10&445&6.3e-09&{\bf 75.5}\\
            \hline
            \multirow{3}{*}{30}&\#1&-&-&$*$&$*$&$*$&$*$&4.2e-10&1812&4.9e-09&{\bf 697}\\
            &\#2&-&-&$*$&$*$&$*$&$*$&4.2e-01&2484&7.2e-09&{\bf 263}\\
            &\#3&-&-&$*$&$*$&3.8e-07&9616&3.0e-11&1042&3.9e-09&{\bf 108}\\
            \hline
            \multirow{3}{*}{40}&\#1&-&-&$*$&$*$&$*$&$*$&$*$&$*$&4.4e-09&{\bf 1984}\\
            &\#2&-&-&$*$&$*$&$*$&$*$&$*$&$*$&4.0e-09&{\bf 2493}\\
            &\#3&-&-&$*$&$*$&$*$&$*$&$*$&$*$&3.3e-09&{\bf 1279}\\
	\end{tabular}}
\end{table}

In Figures \ref{fig:7} and \ref{fig:8}, the factorization size and the maximal KKT residue per iteration in solving a random instance of \eqref{eq:stls} are shown for $s=10,15,20,25,30,40$, respectively.

\begin{figure}[htbp]	
\centering
\begin{tikzpicture}
\footnotesize
\scalefont{0.8} 
\begin{axis}[
sharp plot, 
xmode=normal,
width=12cm, height=6cm,  
xlabel= Iteration,
ylabel = Factorization size,
xlabel near ticks,
ylabel near ticks,
legend style={at={(0.86,0.5)},anchor=south},
]

\addplot[semithick,mark=*,mark options={scale=0.6}, color=lightgreen] 
 file {plotdata/stls_fz_10.txt};
\addlegendentry{$s=10$}
 
\addplot[semithick,mark=x,mark options={scale=0.6}, color=bordeaux]
 file {plotdata/stls_fz_15.txt};
\addlegendentry{$s=15$}

 \addplot[semithick,mark=+,mark options={scale=0.6}, color=color1] 
 file {plotdata/stls_fz_20.txt};
\addlegendentry{$s=20$}

 \addplot[semithick,mark=*,mark options={scale=0.6}, color=color2]
 file {plotdata/stls_fz_25.txt};
    \addlegendentry{$s=25$}
    
\addplot[semithick,mark=x,mark options={scale=0.6}, color=color3] 
 file {plotdata/stls_fz_30.txt};
    \addlegendentry{$s=30$}

\addplot[semithick,mark=+,mark options={scale=0.6}, color=darkblue] 
 file {plotdata/stls_fz_40.txt};
    \addlegendentry{$s=40$}
    
\end{axis}
\end{tikzpicture}
\caption{The factorization size per iteration in solving \eqref{eq:stls}.}
\label{fig:7}
\end{figure}  

\begin{figure}[htbp]	
\centering
\begin{tikzpicture}
\footnotesize
\scalefont{0.8} 
\begin{axis}[
sharp plot, 
xmode=normal,
width=12cm, height=6cm,  
xlabel= Iteration,
ylabel = $\log_{10}\eta_{\max}$,
xlabel near ticks,
ylabel near ticks,
legend style={at={(0.86,0.5)},anchor=south},
]

\addplot[semithick,mark=*,mark options={scale=0.6}, color=lightgreen]
file {plotdata/stls_residue_10.txt};
\addlegendentry{$s=10$}

\addplot[semithick,mark=x,mark options={scale=0.6}, color=bordeaux]
file {plotdata/stls_residue_15.txt};
\addlegendentry{$s=15$}

 \addplot[semithick,mark=+,mark options={scale=0.6}, color=color1]
 file {plotdata/stls_residue_20.txt};
\addlegendentry{$s=20$}

 \addplot[semithick,mark=*,mark options={scale=0.6}, color=color2] 
 file {plotdata/stls_residue_25.txt};
    \addlegendentry{$s=25$}

\addplot[semithick,mark=x,mark options={scale=0.6}, color=color3] 
file {plotdata/stls_residue_30.txt};
    \addlegendentry{$s=30$}

    \addplot[semithick,mark=+,mark options={scale=0.6}, color=darkblue] 
    file {plotdata/stls_residue_40.txt};
    \addlegendentry{$s=40$}
    
\end{axis}
\end{tikzpicture}
\caption{The maximal KKT residue per iteration in solving \eqref{eq:stls}.}
\label{fig:8}
\end{figure}  



\vspace{1em}
The numerical experiments indicate that {\tt ManiSDP} typically outperforms {\tt SDPLR} even in the case of $l = 0$ in which there is no non-trivial manifold structure to exploit. This could be explained as follows: (1) The Riemannian trust-region method enjoys superlinear (or even quadratic) convergence \cite{AbsBakGal2007-FoCM} which may guarantee fast linear convergence of the ALM whereas the ALM with L-BFGS (implemented in {\tt SDPLR}) cannot achieve linear convergence; (2) {\tt ManiSDP} implements the adaptive strategy of updating the factorization size that improves the performance a lot whereas {\tt SDPLR} utilizes a fixed factorization size ($\sim\sqrt{2m}$); (3) {\tt ManiSDP} allows to decrease the penalty parameter (note that a large penalty parameter makes the ALM subproblem more difficult to solve) whereas {\tt SDPLR} does not.

\subsection{Influence of the initial factorization size $p_0$}
In this subsection, we test the performance of {\tt ManiSDP} under different choices of the initial factorization size $p_0$ on the problems described in Sections \ref{mc}--\ref{nsrd}. For each problem of a fixed size and different choices of $p_0$, we run three random instances and then take the average running time. The results are displayed in Table \ref{p0}. It can be seen that except Problem \eqref{sec6:qs}, different choices of $p_0$ do not make big difference on the running time, while for \eqref{sec6:qs}, a larger $p_0$ leads to increment of the running time.

\begin{table}[htbp]
\caption{Running time for different choices of the initial factorization size $p_0$.}\label{p0}
\renewcommand\arraystretch{1.2}
\centering
\begin{tabular}{c|c|c|c|c|c|c|c}
\hline
\multirow{2}{*}{\eqref{sec6-eq6} ($n=4000$)}&$p_0$&1&5&10&15&20&25\\
\cline{2-8}
&time&28.4&25.6&25.8&30.9&30.0&28.1\\
\hline
\multirow{2}{*}{\eqref{sec6:bqp} ($q=50$)}&$p_0$&2&10&20&30&40&50\\
\cline{2-8}
&time&40.7&41.5&38.4&39.7&38.3&37.9\\
\hline
\multirow{2}{*}{\eqref{sec6:qs} ($q=50$)}&$p_0$&1&10&20&30&40&50\\
\cline{2-8}
&time&36.8&45.9&71.0&96.9&122&129\\
\hline
\multirow{2}{*}{\eqref{eq:wahba} ($N=150$)}&$p_0$&1&10&20&30&40&50\\
\cline{2-8}
&time&28.7&30.7&30.5&32.9&31.2&31.2\\
\hline
\multirow{2}{*}{\eqref{eq:stls} ($s=20$)}&$p_0$&1&10&20&30&40&50\\
\cline{2-8}
&time&36.8&38.0&44.9&31.8&47.0&42.9\\
\hline
\end{tabular}
\end{table}

\subsection{Comparison of {\tt ManiSDP} with and without the adaptive strategies}
In this subsection, we compare the performance of {\tt ManiSDP} with and without the adaptive strategies introduced in Sections \ref{sec5-1}--\ref{sec5-2} on the problems described in Sections \ref{mc}--\ref{nsrd}. For each problem of each size, we run three random instances and then take the average running time. The results are displayed in Table \ref{comp}, from which we see that the adaptive strategies (significantly) enhance the performance of {\tt ManiSDP} and speed up the algorithm typically by several (up to $7.5$) times.

\begin{table}[htbp]
\caption{Comparison of running time of {\tt ManiSDP} with and without the adaptive strategies. 
{\tt ManiSDP 1}: {\tt ManiSDP} with the adaptive strategies; {\tt ManiSDP 2}: {\tt ManiSDP} without the adaptive strategies.}\label{comp}
\renewcommand\arraystretch{1.2}
\centering
\begin{tabular}{c|c|c|c|c|c|c|c}
\hline
\multirow{3}{*}{\eqref{sec6-eq6}}&$n$&2000&4000&6000&8000&10000&12000\\
\cline{2-8}
&{\tt ManiSDP 1}&8.03&48.0&143&210&362&578\\
\cline{2-8}
&{\tt ManiSDP 2}&9.66&51.7&151&286&430&652\\
\hline
\multirow{3}{*}{\eqref{sec6:bqp}}&$q$&10&20&30&40&50&60\\
\cline{2-8}
&{\tt ManiSDP 1}&0.19&0.74&4.78&8.87&48.2&331\\
\cline{2-8}
&{\tt ManiSDP 2}&0.56&1.65&12.1&35.1&156&1906\\
\hline
\multirow{3}{*}{\eqref{sec6:qs}}&$q$&10&20&30&40&50&60\\
\cline{2-8}
&{\tt ManiSDP 1}&0.23&0.64&7.95&12.6&38.3&89.0\\
\cline{2-8}
&{\tt ManiSDP 2}&0.89&2.17&27.4&73.7&286&622\\
\hline
\multirow{3}{*}{\eqref{eq:wahba}}&$N$&50&100&150&200&300&500\\
\cline{2-8}
&{\tt ManiSDP 1}&3.18&20.4&34.6&66.4&193&819\\
\cline{2-8}
&{\tt ManiSDP 2}&12.0&113&163&337&765&4721\\
\hline
\multirow{3}{*}{\eqref{eq:stls}}&$s$&10&15&20&25&30&40\\
\cline{2-8}
&{\tt ManiSDP 1}&1.15&11.0&44.5&302&356&1918\\
\cline{2-8}
&{\tt ManiSDP 2}&2.67&40.1&242&2074&2238&5144\\
\hline
\end{tabular}
\end{table}

\section{Conclusions}\label{conclusions}
We have presented a manifold optimization approach to solve linear SDPs with low-rank solutions by integrating the ALM and the Burer-Monteiro factorization. 
Global convergence is guaranteed under certain conditions despite the non-convexity brought by the Burer-Monteiro factorization. A practical algorithm is provided and diverse numerical experiments demonstrate its superior performance. It has been shown that our solver {\tt ManiSDP} is capable of solving linear SDPs with millions of equality constraints to a very high precision in a reasonable time.

More research is required to achieve a comprehensive understanding of Algorithm \ref{alg1}. In particular, we believe that a global convergence result could be established under much weaker conditions. Another interesting point is the fast convergence rate of the algorithm that we empirically observed. These issues will be pursued in our future work.

We emphasize that {\tt ManiSDP} is still in an early stage of development and the strength of the approach has not been fully revealed yet. Among others, we list several directions in enhancing the approach: (1) designing a line search method to determine the step size for escaping from saddle points; (2) preconditioning for the Riemannian Hessian; (3) more efficiently escaping from saddle points; (4) handling SDPs with inequality constraints.
Moreover, as SDPs may contain multiple PSD blocks (e.g., SDP relaxations for sparse polynomial optimization problems \cite{tssos2,tssos1,tssos3}), it is also worth extending {\tt ManiSDP} to handle multi-block SDPs. We believe that all of these efforts will eventually lead to a more powerful SDP solver, which makes large-scale low-rank SDPs even more tractable and hence allows to tackle hard application problems in real world.

\section*{Acknowledgments}
The authors would like to thank Heng Yang for kindly providing the scripts for running {\tt STRIDE} and for generating random instances of the robust rotation search problem and the problem of nearest structured rank deficient matrices.

\section*{Declarations}
\subsection*{Funding}
This work is supported by National Key R\&D Program of China (No. 2022YFA1005102) and the NSFC (No. 12201618).

\subsection*{Competing interests}
The authors have no competing interests to declare that are relevant to the content of this article.

\subsection*{Data availability}
The authors confirm that all data generated or analysed during this study are included in this article.

\bibliographystyle{siamplain}
\bibliography{refer}
\end{document}